\providecommand{\R}{\mathbb{R}}
\newcommand{\dd}{\mathrm{d}}
\renewcommand{\leq}{\leqslant}
\renewcommand{\geq}{\geqslant}
\renewcommand{\div}{\operatorname{div}}
\newtheorem{theorem}{Theorem}
\newtheorem{ass}{Assumption}
\newtheorem{corollary}[theorem]{Corollary}
\newtheorem{proposition}[theorem]{Proposition}
\newtheorem{lemma}[theorem]{Lemma}
\newtheorem{remark}{Remark}
\title[Brinkman force for a polydisperse cloud]{On the effect of polydispersity and rotation on the Brinkman force induced by a cloud of particles on a viscous incompressible flow}
\author{M. Hillairet, A. Moussa and F. Sueur}
\address[M. Hillairet]{Institut Montpelli\'erain Alexander Grothendieck, Universit\'e de Montpellier}
\email{matthieu.hillairet@umontpellier.fr}
\address[A. Moussa]{Sorbonne Universit\'es, UPMC Univ Paris 06 \& CNRS, UMR 7598 LJLL, Paris, F-75005, France}
\email{ayman.moussa@upmc.fr}
\address[F. Sueur]{Institut de Math\'ematiques de Bordeaux, UMR CNRS 5251, Universit\'e de Bordeaux}
\email{franck.sueur@math.u-bordeaux.fr}
\date{\today}
\begin{document}

\maketitle

\begin{abstract}
In this paper, we are interested in the  collective friction of a cloud of particles on the viscous incompressible  fluid  in which they are moving. 
The particles velocities are assumed to be given and the fluid is assumed to be driven by the stationary Stokes equations.
We consider  the limit  where the number $N$ of particles goes to infinity with their  diameters of order $1/N$  and  their mutual distances of order $1/ N^\frac{ 1 }{3 } $.
 The rigorous convergence of the fluid velocity to a limit which is solution to a stationary Stokes equation set in the full space but with an extra term, referred to as the Brinkman force, was proven in  \cite{DGR} when the particles are identical spheres in prescribed translations.
Our result here is an extension  to particles of arbitrary shapes in prescribed translations and rotations. 
The limit Stokes-Brinkman system involves the particle distribution in position, velocity and shape, through 
the so-called Stokes' resistance matrices. 
\end{abstract}

\medskip

\section{Introduction}

In this paper, we consider  a cloud of rigid particles  moving in a viscous steady incompressible fluid with no-slip boundary conditions at the interface. 
We  are interested in the  collective friction of the particles on the surrounding fluid. 
Namely, we aim to justify in the limit of an infinite number of particles the appearance of a force in the fluid equation resulting from the collective reaction of particles to the drag forces exerted by the viscous fluid on them. This force is identified at first  by {\sc H.C. Brinkman} in \cite{Brinkman}. It is then recovered analytically  in different frameworks (static/quasi-static, deterministic/random) \cite{Allaire,Rubinstein,DGR}. We refer the  reader to \cite{DGR} for a more comprehensive review of known results on that topic. In the deterministic and quasistatic framework of \cite{DGR,matthieu}, we extend herein the computation of the Brinkman force to a more general setting where the particles  do not have necessarily the same shape, including for instance the polydisperse clouds where the particles are spheres with varying radii, and where the particles rotations are taken into account.
 Our method is directly inspired from the one introduced in \cite{matthieu}, the main difference being the appearance of the so-called \emph{Stokes' resistance matrix} that we introduce below. These matrices are related to the `Stokes' capacity'' used in  \cite{FNN} where the case of fixed particles is considered.  
Finally let us also mention the recent paper \cite{HV} which relies on the  method of reflections, the proceeding paper \cite{ricci} in which the case of different shapes is explored but only for scalar equations with constant boundary conditions and a rather formal presentation, the paper \cite{OJ} for the identification of a dilute regime of sedimentation and  the recent papers \cite{DFL,FL,LS} dealing with the case of a dilute cloud of fixed particles for which the limit system is the Stokes equations in the full space without any Brinkman force. 

\subsection{Notations}\label{subsec:not}
In all this article, given an open set $\mathcal{F}\subset \R^3$, we introduce the following space of vector fields 
\begin{align*}
V(\mathcal{F}) &:= \left\{ v \in L_{\textnormal{loc}}^2(\overline{\mathcal{F}}) \,:\, \nabla v\in L^2(\mathcal{F})\, \textnormal{and div}\, v =0\right\}.
\end{align*}
Since $V(\mathcal{F}) \subseteq H^1_{\textnormal{loc}}(\mathcal{F})$, if $\mathcal{F}$ is sufficiently smooth (which will always be the case) and $u\in V(\mathcal{F})$, the value of $u$ on $\partial \mathcal{F}$ is understood through a well-defined trace operator $\gamma$. The kernel of $\gamma$ is denoted $V_0(\mathcal{F})$.

\vspace{2mm}

Note that, when $\mathcal{F}$ is bounded, $V(\mathcal{F})$ (resp. $V_0(\mathcal{F})$) is simply the subspace of $H^1(\mathcal{F})$ (resp. $H^1_0(\mathcal{F})$) containing vector fields having a vanishing divergence. 

\vspace{2mm}

We introduce also the following notations:
\begin{itemize}
\item  for any measurable set $O$ of positive measure (that we denote $|O|$) and any integrable function $f$ on $O$ :
\begin{align}
\label{moyenne}
\fint_{O} f = \frac{1}{|O|} \int_O f.
\end{align}
\item for any $x \in \mathbb R^3,$ and $0 < r_{int} < r_{ext},$ we denote $A(x,r_{int},r_{ext})$ the annulus having center $x$,  interior radius $r_{int}$ and 
exterior radius $r_{ext}.$ 
\end{itemize}

\subsection{The system}

Let $N \in \mathbb N,$ $\Omega$ a smooth open bounded cavity of $\mathbb R^3$ and $(B_i^{N})_{i=1}^{N}$  a set of $N$ smooth (disjoints) simply connected domains such that  $B_i^N \Subset \Omega$ for $i \in \{1,\ldots, N\}$. This collection of domains represents the cloud of rigid particles.  We assume the existence of a constant $R_0$ (independent of $N,i$) and matrices $Q_i^N\in\text{SO}_3(\R)$ such that 
\begin{align} 
\label{eq_ass1} B_i^N &= h_i^N + \dfrac{1}{N} Q_i^N \mathcal B_i^{0,N}, \text{ where } \mathcal{B}_i^{0,N} \text{ is a domain of } \mathbb R^3 \text{ such that }\mathcal B_i^{0,N} \subset B(0,R_0)\,. 
\end{align}
The domains $(\mathcal B_{i}^{0,N})_{i=1}^N$ represent the shapes of the particles while the matrices $(Q_i^N)_{i=1}^N$ describe the rotations of the particles w.r.t. these reference configurations.
Below, we denote $\mathcal B_i^N = Q_i^N \mathcal B_i^{0,N}$ for simplicity.
Denoting
 $$
 \mathcal F^{N} := \Omega  \setminus \bigcup_{i=1}^{N} B_{i}^{N}\,,
 $$
 we are interested in the following system:
\begin{equation} \label{eq_stokesN}
\left\{
\begin{array}{rcl}
- \Delta u^N + \nabla p^N &=& 0\,, \, \\
{\rm div}\,  u^N &= & 0 \,,
\end{array}
\right.
\quad \text{ on $\mathcal F^{N}$}\,,
\end{equation}
completed with the (rigid) boundary conditions 
\begin{equation} \label{cab_stokesN}
\left\{
\begin{array}{rcll}
u^N(x) &=& \ell_i^N + \omega_i^N \times (x - h_i^N)   \,, &  \text{on $\partial  B_i^{N}$} \,, \\
u^N(x) &=& 0 \,, & \text{on $\partial \Omega$}\,,
\end{array}
\right.
\end{equation}
for some given $N$-uplet $(\ell_i^N)_{i=1}^N $ in $(\mathbb R^3)^{N}$ and $(\omega_i^N)_{i=1}^N $ in $(\mathbb R^3)^{N}$.  

\medskip 

The well-posedness of this system in $V(\mathcal{F}^N)$ is standard (for more details, see Section \ref{sec_Stokes}). We are here interested in the asymptotic of such system when $N\rightarrow +\infty$ for which a relevant notion is the (individual) \emph{Stokes' resistance matrix} that we introduce in next paragraph.

\medskip

 \subsection{Stokes' resistance matrix}

Fix $\mathcal{B}$  a simply connected domain centered at $0$ (typically one of the previous $\mathcal{B}_i^N$). For $\ell$ and $\omega$ in $\R^3$, consider the resolution of the Stokes problem in the exterior of $\mathcal{B}$, completed with the boundary condition $u=\ell+\omega \times x$ on $\partial\mathcal{B}$ (see also Section \ref{sec_Stokes} for more details). The solid exerts some force $\mathbb{F}(\ell,\omega)$ in $\R^3$ and torque $\mathbb{T}(\ell,\omega)$ in $\R^3$ onto the fluid at the boundary $\partial\mathcal{B}$
\begin{align*}
{\mathbb{F}(\ell,\omega)} = \int_{\partial  \mathcal{B}} \Sigma(u,p)n \dd\sigma\quad\text{ and }\quad
{\mathbb{T}(\ell,\omega)} = \int_{\partial  \mathcal{B}}    x \times  \Sigma(u,p)n \dd\sigma,
\end{align*}
where $n$ is the normal to $\partial \mathcal B$ (oriented to the interior of $\mathcal{B}$) and
\begin{equation} \label{eq_Newt2}
\Sigma(u,p) = 2 D(u) - p \mathbb{I}_3\,,
\end{equation}
where $2 D(u):= \nabla u + (\nabla u)^T$.   Because of the linearity of the Stokes problem and of the stress tensor, the mapping
\begin{align}
\nonumber \R^6 &\longrightarrow \R^6 \\
\label{eq:maplin}\begin{pmatrix}
\ell\\
\omega
\end{pmatrix}
 &\longmapsto \begin{pmatrix}
\mathbb{F}(\ell,\omega) \\
\mathbb{T}(\ell,\omega)
\end{pmatrix}
\end{align}
is linear. In order to give a more explicit formulation let us introduce 
the vector fields:
\begin{equation} \label{t1.6}
K_i (X) := \left\{\begin{array}{ll} 
e_i & \text{if} \ i=1,2,3 ,\\ \relax
[ e_{i-3} \times X]_{i-3}& \text{if} \ i=4,5,6 ,
\end{array}\right.
\end{equation}
where $e_i $ denotes the $i$th unit vector of the canonical basis of $ \R^{3}$.
Let be, for $i=1, \ldots,6$, $ (V_{i},P_i) : \R^3 \rightarrow \R^3 \times \mathbb R$  the "unique" solution to the   Stokes system in $\R^3 \setminus \mathcal B$ 
(we recall that the pressure is determined up to an additive constant) with 
\begin{gather}
\label{sti3}
   V_{i}  =   K_{i}   ,  \ \text{ for } \  x \text{ on }  \partial  \mathcal B  , 
\\  \label{sti4}
\lim_{|x|\to \infty} | V_{i}  (x)|  =0     .
\end{gather}
The linear map \eqref{eq:maplin} is represented by a matrix $\mathbb M$ : 
\begin{align*}
\mathbb M:=  \Big( \int_{   \partial   \mathcal B}  \Sigma(V_j,P_j)n   \cdot K_{i}\, \dd\sigma  \Big)_{1 \leq i,j \leq 6},
\end{align*}
by integration by parts we get to see that 
\begin{equation} \label{eq_defM}
\mathbb M:= 2\Big( \int_{\R^3\setminus     \mathcal B} D (   V_{i}) : D (   V_{j}) \, \dd x  \Big)_{1 \leq i,j \leq 6},
\end{equation}
where $D(V)$ stands for the symmetric part of the jacobian matrix of $V$. This last expression allows to see that $\mathbb{M}$ is symmetric positive definite, as a consequence of energy and uniqueness properties of the exterior Stokes system. This matrix is usually called the ``Stokes' resistance'' matrix in the literature. For later purposes, we decompose the $9\times 9$ matrix $\mathbb M$
into four $3\times 3$ parts:  \begin{equation}
 \label{defK}
 \mathbb M := 
  \begin{pmatrix}
 \mathbb M_I &  \mathbb M_{II}^{\top}
  \\  \mathbb M_{II} &  \mathbb M_{III}
   \end{pmatrix} ,
\end{equation}
We note that $\mathbb M_I$ and $\mathbb M_{II}^{\top}$ (resp. $\mathbb M_{II}$ and  $\mathbb M_{III}$) represent the respective contributions of the translations and rotations to
the forces (resp. torque) applied by $\mathcal B$ on the fluid. 
 
 \medskip

 It will appear from the analysis below that some part of the constraints exerted by the fluid on the solid do have a counterpart on the opposite sense, and some not. As a matter of fact, the influence of the family of solids on the fluid will be described through the following list :
\begin{itemize}
\item the position of the centers of mass $(h_i^{N})_{i=1}^N,$
\item the translation velocity of the particles $(\ell_i^N)_{i=1}^N,$
\item the rotation velocity of the particles $(\omega_i^N)_{i=1}^N,$
\item the blocks  $ (\mathbb M_i^N)_I $ and $  (\mathbb M_i^N)_{II}$ of the  Stokes resistance matrix associated with the solid $\mathcal{B}_i^N$.
\end{itemize}
All these informations are stored in the empirical measure 
\begin{equation}
\label{defME}
S^N = \dfrac{1}{N} \sum_{i=1}^{N} \delta_{h^N_i,\ell_i^N, \omega_i^N , (\mathbb M_i^N)_I ,  (\mathbb M_i^N)_{II}} \in \mathbb P(\Omega \times E ),
\end{equation}
where 
$$ E:=  \mathbb R^3 \times \mathbb R^3 \times \R^3 \times \mathcal M_{3}  (\mathbb R) \times \mathcal M_{3}  (\mathbb R)  ,$$
and we encode the asymptotic behavior of the data by prescribing some convergence properties on the sequence $(S^N)_N$.

\subsection{Assumptions}
We consider the following set of assumptions 
\begin{ass}\label{ass:1}
Dilution: there exists a positive constant $C_0$ independent of $N,i,j$ such that 
\begin{equation} \label{eq_ass3}
\forall \, i \neq j,\quad |h_i^{N} - h_j^{N}| \geq \dfrac{C_0}{N^{\frac 13}}\,, \quad\forall \, i, \quad {\rm dist}(h_i^N,\partial \Omega) \geq \dfrac{C_0}{N^{\frac 13}} .
\end{equation}
In the sequel we will systematically use this notation
\begin{align}
\label{eq:delta} \delta^N=\dfrac{C_0}{N^{1/3}}.
\end{align}
\end{ass}

\begin{ass}\label{ass:2}
Uniform bound: 
\begin{equation}
\sup_N \Big[\dfrac{1}{N} \sum_{i=1}^{N} ( |\ell_i^N |^2 + | \frac1N \omega_i^N |^2  )\Big] < \infty,  \label{eq_ass4} 
\end{equation}
\end{ass}

Because we assume the shape of the solids are uniformly bounded (namely included in $B(0,R_0)$) we prove below (see Corollary \ref{coro:uniformM}) that
we have also the following uniform bound:
\begin{equation}
\sup_N \max_{i=1}^N\Big( |(\mathbb M_i^N)_I| +  |(\mathbb M_i^N)_{II}| \Big)<\infty. \label{eq_ass4bis}
\end{equation}
Combining assumption \eqref{eq_ass4} with \eqref{eq_ass4bis}, one checks easily the existence of a subsequence $(S_{N_k})_k$ such that:
\begin{align}
\int_{E }( M_I \, \ell + \frac1{N_k}  M_{II}^T  \, \omega ) S^{N_k}( \, {\rm d}\ell \, {\rm d}\omega  \, {\rm d}M_I \, {\rm d}M_{II}) \operatorname*{\rightharpoonup}_{k} \bar{\mathbb{F}}  && \text{as (vectorial-)measures on $\Omega\,.$} \label{eq_ass5} \\
\int_{E }  M_I   S^{N_k}( \,{\rm d}\ell \, {\rm d}\omega  \, {\rm d}M_I \, {\rm d}M_{II}) \operatorname*{\rightharpoonup}_{k} \bar{\mathbb{M}}_I  && \text{as (matrix-)measures on $\Omega\,.$} \label{eq_ass6} 
\end{align}

%
%

\subsection{Statement}
We also introduce the following extension operator for any $v\in V(\mathcal{F}^N)$ satisfying rigid boundary conditions \eqref{cab_stokesN}
\begin{align}
\label{eq:prol}E_{\Omega}[v] = 
\left\{
\begin{array}{rl}
v & \text{in $\mathcal F^N,$}\\
\ell_i^N + (x - h_i^N) \times \omega_i^N & \text{in $B_i^N$ for $i=1,\ldots,N$}\,.
\end{array}
\right.
\end{align}
Due to the boundary conditions \eqref{cab_stokesN}, one checks easily that $E_\Omega[v]\in V_0(\Omega)$. We are now in position to state our main result.
\begin{theorem} \label{thm_main}
Assume \eqref{eq_ass1} and Assumptions \ref{ass:1}, \ref{ass:2}. Assume furthermore that in \eqref{eq_ass5} -- \eqref{eq_ass6} we have 
$$
\bar{\mathbb{F}}\in H^{-1}(\Omega)  \,, \quad \bar{\mathbb{M}}_I \in L^\infty(\Omega; \mathcal M_{3}  (\mathbb R)  )\,.
$$
Then, the subsequence of extensions $(E_{\Omega}(u_{N_k}))_k$ converges weakly in $V_0(\Omega)$ to $\bar{u}$ satisfying, for all $w$ in $V_0$:
\begin{align} \label{eq_SBweak}
2 \int_{\Omega} D(\bar{u}) : D(w) = \langle \bar{\mathbb{F}} - \bar{\mathbb{M}}_I  \, \bar{u}, w\rangle .
\end{align}
\end{theorem}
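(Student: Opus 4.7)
I first show that $(E_\Omega[u_N])_N$ is uniformly bounded in $V_0(\Omega)$. Around each $B_i^N$, in the annulus $A(h_i^N, 2R_0/N, \delta^N/2)$ — pairwise disjoint by \eqref{eq_ass3} — I build a divergence-free lift of the rigid boundary data $\ell_i^N + \omega_i^N \times (x - h_i^N)$ by truncating a scaled combination of the Stokes profiles \eqref{sti3}--\eqref{sti4}, with a Bogovskii-type correction to restore divergence-freeness. Rescaling and using \eqref{eq_ass4bis} gives $\|\text{lift}_i\|_{H^1}^2 \le C N^{-1}(|\ell_i^N|^2 + |\omega_i^N/N|^2)$; since the supports are disjoint, summation and \eqref{eq_ass4} yield a total $H^1$-bound of order $1$. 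The standard energy estimate for the resulting Stokes problem with homogeneous boundary data then gives $\sup_N \|E_\Omega u_N\|_{V_0(\Omega)} < \infty$, and weak compactness furnishes a subsequence $E_\Omega u_{N_k} \rightharpoonup \bar u$ in $V_0(\Omega)$, with strong convergence in $L^2(\Omega)$ by Rellich.

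\textbf{Step 2 (Boundary-integral representation).} Fix a smooth divergence-free test function $w$ compactly supported in $\Omega$. Since $D(E_\Omega u_N) = 0$ on each $B_i^N$, the Stokes equation \eqref{eq_stokesN} and integration by parts give
\[
2 \int_\Omega D(E_\Omega u_N) : D(w) \, \dd x = \sum_{i=1}^N \int_{\partial B_i^N} \Sigma(u_N, p_N) n \cdot w \, \dd\sigma ,
\]
with $n$ pointing into $B_i^N$ as in the definition of $\mathbb M$. Using $\mathrm{diam}(B_i^N) = O(1/N)$ and Taylor-expanding $w$ around $h_i^N$, splitting $\nabla w(h_i^N)$ into symmetric and antisymmetric parts, the right-hand side becomes $\sum_i \mathbb F_i^N \cdot w(h_i^N) + \tfrac12 \sum_i \mathbb T_i^N \cdot \curl w(h_i^N) + o(1)$. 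A scaling count gives $|\mathbb T_i^N| = O(N^{-2})$, and together with \eqref{eq_ass4} the torque contribution is $o(1)$.

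\textbf{Step 3 (Quasi-static force expansion and limit --- main obstacle).} The crux of the proof is the expansion
\[
\mathbb F_i^N = \tfrac{1}{N}(\mathbb M_i^N)_I \bigl(\ell_i^N - \bar u(h_i^N)\bigr) + \tfrac{1}{N^2}(\mathbb M_i^N)_{II}^\top \omega_i^N + r_i^N ,
\]
with $\sum_i |r_i^N| \, \|w\|_\infty \to 0$. I would build an approximate velocity $\tilde u_N = \bar u + \sum_i \chi_i^N \phi_i^N$, where $\phi_i^N$ is the rescaled Stokes profile around $\mathcal B_i^N$ for boundary data $(\ell_i^N - \bar u(h_i^N), \omega_i^N/N)$ and $\chi_i^N$ is a cut-off on $A(h_i^N, 2R_0/N, \delta^N/2)$, corrected to be divergence-free. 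By linearity and the definition \eqref{eq_defM}, the force that $\tilde u_N$ exerts on $B_i^N$ is exactly the leading term above; estimating the energy of $u_N - \tilde u_N$ reduces to bounding particle-particle interactions, which through the $1/|x|$ decay of $\phi_i^N$ and the dilution \eqref{eq_ass3} produce a double sum converging critically at the scale $\delta^N = C_0 N^{-1/3}$. Substituting in Step 2 and rewriting as an integral against \eqref{defME},
\[
\int_{\Omega \times E} \bigl[ M_I(\ell - \bar u(h)) + \tfrac{1}{N_k} M_{II}^\top \omega \bigr] \cdot w(h) \, \dd S^{N_k} \longrightarrow \langle \bar{\mathbb F}, w\rangle - \int_\Omega (\bar{\mathbb M}_I \bar u) \cdot w \, \dd x ,
\]
by \eqref{eq_ass5}--\eqref{eq_ass6} and the strong $L^2$-convergence $E_\Omega u_{N_k} \to \bar u$ (needed to make sense of the evaluations $\bar u(h_i^N)$ via a local averaging argument). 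This is \eqref{eq_SBweak}. \textbf{The main difficulty} is Step 3: the particle spacing $N^{-1/3}$ and particle size $N^{-1}$ are at the exact critical scale for the Brinkman limit, so the pairwise-interaction estimates just barely close --- any weaker dilution rate in \eqref{eq_ass3} would cause them to diverge.
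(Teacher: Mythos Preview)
Your Steps 1--2 are sound and already diverge from the paper (which uses an explicit curl construction for the lift and never writes the boundary-stress identity), but Step 3 is where the argument breaks. The paper does not compute the forces $\mathbb F_i^N$ or build an approximate solution $\tilde u_N$. It works entirely on the test-function side: since $u^N$ solves Stokes on $\mathcal F^N$, one may replace $w$ in the energy pairing by any divergence-free lift $w^N$ of its trace on $\partial\mathcal F^N$, and the paper takes $w^N$ to be a sum of \emph{local} Stokes solutions on the pairwise disjoint annuli $\Omega_i^N=B(h_i^N,\delta^N/2)\setminus B_i^N$. Simultaneously it replaces $u^N$ on each $\Omega_i^N$ by a local Stokes solution $\bar u_i^N$ carrying the rigid data on $\partial B_i^N$ and the \emph{local average} $m_i^N=\fint_{\Omega_i^N}u^N$ on the outer sphere. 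After rescaling, Corollary~\ref{coro:lorentzR} turns each term into $N^{-1}\,\mathbb M_i^N\bigl(\ell_i^N-m_i^N,\,\omega_i^N/N\bigr)\cdot(w(h_i^N),0)$ plus a uniformly small error, and two remainders $R_1^N,R_2^N$ are dispatched using only Lemmas~\ref{lem:stokesdecay}--\ref{lem_convergenceuR} and a Bogovskii construction. No particle--particle interaction ever appears, because the correctors have disjoint supports and the background at particle $i$ is the average of the \emph{actual} $u^N$; the passage $m_i^N\to\bar u$ needs only the strong $L^2$ convergence of $E_\Omega u^N$ (Lemma~\ref{lem:diss}).

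Your Step 3 instead postulates a Fax\'en-type expansion $\mathbb F_i^N=N^{-1}(\mathbb M_i^N)_I(\ell_i^N-\bar u(h_i^N))+\ldots+r_i^N$ with $\sum_i|r_i^N|\to 0$, justified by an energy bound on $u_N-\tilde u_N$. Two things fail. First, a global $H^1$ estimate on $u_N-\tilde u_N$ does not control the $N$ individual boundary stresses that make up the $\mathbb F_i^N$. Second, and more fundamentally, $\tilde u_N=\bar u+\sum_i\chi_i^N\phi_i^N$ does \emph{not} approximately solve Stokes on $\mathcal F^N$: the background $\bar u$ satisfies Stokes--Brinkman, leaving a residual $\bar{\mathbb F}-\bar{\mathbb M}_I\bar u$ of size $O(1)$ in $H^{-1}$, so $\|u_N-\tilde u_N\|_{H^1}$ does not tend to zero. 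Your ansatz with $\phi_i^N$ driven by $\ell_i^N-\bar u(h_i^N)$ presupposes that the collective $O(1)$ effect of the cloud on the flow near $h_i^N$ is already encoded in $\bar u$, which is precisely the conclusion you want; closing this loop rigorously (the method-of-reflections route) is substantially harder than your sketch and in the existing literature typically requires hypotheses stronger than Assumption~\ref{ass:1}. There is also a regularity issue: $\bar u\in H^1$ only, so the point values $\bar u(h_i^N)$ and estimates of the type $\bar u(x)-\bar u(h_i^N)=O(1/N)$ on $\partial B_i^N$ are not available. The paper's use of local averages of $u^N$ together with disjointly supported test-function correctors sidesteps all of this.
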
 
%
%
\begin{remark}\label{rem:nablaD}
A standard computations shows that the l.h.s. of \eqref{eq_SBweak} equals the usual bilinear form on $V_0$ that is $\langle \nabla \bar{u}, \nabla w\rangle_{L^2(\Omega)}$.
\end{remark}

It is classical that if $\bar{u}$ in $V_0(\Omega)$  satisfies \eqref{eq_SBweak} then one may reconstruct a pressure $\bar{p} \in H^{-1}(\Omega)$ so that the following equalities holds at least in the sense of distributions on $\Omega$  
\begin{equation} \label{eq_SB}
\left\{
\begin{array}{rcl}
-\Delta \bar{u} + \nabla \bar{p} &=&  \bar{\mathbb{F}} - \bar{\mathbb{M}}_I  \,  \bar{u} , \, \\
\div \, \bar{u} &= & 0,
\end{array}
\right.
\end{equation}
completed with the boundary conditions (to be understood in $H^{1/2}(\partial \Omega)$)
\begin{equation} \label{cab_SB}
\bar{u}= 0 \,  \quad  \text{on $\partial \Omega$}\,.
\end{equation}
\begin{remark}
In the particular case when $(\mathcal B_i^N)_{i=1}^N$ are spheres of the same radius $r^N$, then there holds $(\mathbb M_{i}^N )_I= 6 \pi r^{N} \mathbb I$ and $(\mathbb M_{i}^N)_{II} =0$ (see for instance \cite{lf}, Section 20). This case is in fact the one studied in \cite{matthieu}. The main difference with our analysis is that we have to take into account the rotation of the particles, leading to an extra contribution in the source term $\bar{\mathbb F}$ and a non-diagonal matrix. As a by-product, we also recover the \emph{polydispersed} case in which all the particles are spherical, but need not share the same radius (in this case the matrix is diagonal, see \cite{MHpp}).
\end{remark}
%

\medskip
\subsection{Comments and organization of the paper}

Our proof of {\bf Theorem \ref{thm_main}} is quite similar in the structure to the proof of \cite{DGR}. First, we obtain that, under assumption \eqref{eq_ass1}-\eqref{eq_ass6}, the solutions to the Stokes problem \eqref{eq_stokesN}-\eqref{cab_stokesN} define a bounded sequence in $H^1_0(\Omega).$ We may then extract a weak cluster point $\bar{u}$ in this very space. To prove that the weak-limit is a solution to the Stokes-Brinkman system, we apply the weak formulation of the Stokes problem  so that our aim is to pass to the limit in  quantities of the form:
$$
\int_{\Omega} D( u^N) : D( \varphi)\,, \text{ for arbitrary divergence-free $\varphi \in C^{\infty}_c(\Omega).$} 
$$
To this end, we want to apply that $u^N$ is a solution to the Stokes problem in $\mathcal F^N$ so that we need to modify a bit the test-function to make it admissible for the Stokes problem \eqref{eq_stokesN}-\eqref{cab_stokesN}. This is done by introducing suitable correctors solution to Stokes problem. New terms appear that will converge to the extra terms involved in the Stokes Brinkman problem. 
This is where our proof differs of \cite{DGR}. In this previous reference, the authors apply explicit formulas for correctors which help to compute the extra terms in the Stokes-Brinkman problem. We apply herein variational properties of the solutions to the Stokes problem to estimate these new terms. This makes possible to give a relevant mechanical interpretations of the new terms involved by correctors and to adapt the method to arbitary shapes of the particles. We do not directly invoke in our approach the abstract results of D. Cioranescu and F. Murat (related to the appearance of a  ``terme \'etrange" \cite{CioranescuMurat}) because we reproduce locally the ideas of the proof in this reference to obtain explicit remainder terms. 

\medskip

The outline of the paper is as follows. As our proof is based on fine properties of the Stokes problem, we recall in next section basic and advanced material on the resolution of this problem in bounded and in exterior domains. The heart of the paper is Section \ref{sec_proof} where a more rigorous statement of our main result is given and the proof is developed. In an appendix, we collect 
technical useful properties related to the Bogovskii operators and Poincar\'e-Wirtinger type inequalities.

\section{Reminders on the Stokes problem} \label{sec_Stokes}

In this section we recall basic facts on the Stokes problem in compact and exterior domains (see \cite{Galdi} for more details). Let $N \in \mathbb N,$ and $B_1,\dots,B_N$ a family of $N$ bounded smooth (disjoint) closed simply connected domains of boundaries $\Gamma_1,\dots,\Gamma_N$ and denote $\Gamma:=\cup_{i=1}^N \Gamma_i$. We consider an open set $\Omega$ containing all the sets $B_i$ and define $\mathcal{F}=\Omega \setminus \bigcup_{i=1}^N B_i$. In what follows $\Omega$ is either $\R^3$ or bounded.

\subsection{Standard statements}
Let $u_*\in H^{1/2}(\Gamma)$. We recall that the Stokes problem:
\begin{equation} \label{eq_stokesledeux}
\left\{
\begin{array}{rcl}
- \Delta u + \nabla p &=& 0 \, \\
\div u &= & 0 \,.
\end{array}
\right.
\quad \text{ on $\mathcal F$}
\end{equation}
completed with:
\begin{equation} \label{cab_stokesledeux}
u = u_* \quad \text{ on $\Gamma$,} \quad \text{ and } u=0 \quad \text{ on $\partial\Omega$,}
\end{equation}
is associated with the generalized formulation: \\

\begin{minipage}{\textwidth}
\em{
Find $u \in V(\mathcal F)$ such that 
\begin{itemize}
\item for arbitrary $\varphi \in V_0(\mathcal F),$ there holds:
\begin{equation}
\label{eq:var}\int_{\mathcal F} D(u) : D(\varphi) = 0\,;
\end{equation}
\item the boundary conditions \eqref{cab_stokesledeux}  are satisfied in the sense of the trace operator $\gamma$ introduced in paragrah \ref{subsec:not}.
\end{itemize}
}
\end{minipage} \\

\begin{remark}
Notice that \eqref{eq:var} is here completely equivalent to 
\begin{align*}
\int_{\mathcal F} \nabla u : \nabla \varphi = 0,
\end{align*}
because $\div u =0$ and $\varphi\in V_0(\mathcal F)$. However, when dealing with rigid boundary conditions, it will be clear from the forthcoming computations (see Proposition \ref{prop:demilorentz}) that the previous formulation is more convenient.
\end{remark}

Standard arguments yield the following result.
\begin{theorem} \label{thm_varcar}
Given $u_* \in H^{\frac 12}(\Gamma)$ satisfying 
\begin{equation} \label{eq_noflux}
\int_{\Gamma_i}  u_* \cdot n {\rm d}\sigma = 0\,, \quad \forall \, i \in \{1,\ldots, N\},
\end{equation}
we have:
\begin{itemize}
\item there exists a unique generalized solution $u$ to \eqref{eq_stokesledeux} -- \eqref{cab_stokesledeux};
\item this generalized solution is characterized by the equality
$$
\int_{\mathcal F} |\nabla u|^2 = \min \left\{ \int_{\mathcal F} |\nabla v|^2 \,:\, v \in V(\mathcal F) \text{ s.t. } \gamma v =  u_*\mathbf{1}_\Gamma \right\}\,.
$$
\item this generalized solution is also characterized by the equality
\begin{align*}
\int_{\mathcal F} |D(u)|^2 =  \min \left\{\int_{\mathcal F} |D(v)|^2\,:\, v \in V(\mathcal F)\text{ s.t. } \gamma v = u_*\mathbf{1}_{\Gamma}\right\}.
\end{align*}
\end{itemize}
\end{theorem}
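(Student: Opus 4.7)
The plan is to reduce existence and uniqueness to Lax--Milgram after a divergence-preserving lifting of the boundary data, and then to obtain the two energy characterisations by strict convexity. First I would construct a fixed $u_0\in V(\mathcal F)$ with $\gamma u_0=u_*\mathbf{1}_\Gamma$: extending each $u_*|_{\Gamma_i}$ into a thin tubular neighbourhood of $\Gamma_i$ disjoint from the other particles and from $\partial\Omega$ produces a vector field whose divergence defect on the neighbourhood has zero integral precisely thanks to \eqref{eq_noflux}, and the Bogovskii operator recalled in the appendix then restores incompressibility. This is the step where the no-flux assumption \eqref{eq_noflux} is essential; if $\Omega=\R^3$ the resulting lift is compactly supported and automatically lies in $V(\mathcal F)$.

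With $u_0$ at hand, I would seek $u=u_0+v$ with $v\in V_0(\mathcal F)$ solving
\[\int_{\mathcal F} D(v):D(w)\,\dd x=-\int_{\mathcal F} D(u_0):D(w)\,\dd x\qquad\text{for all } w\in V_0(\mathcal F).\]
Continuity of both forms is obvious, and coercivity of $(v,w)\mapsto\int D(v):D(w)$ on $V_0(\mathcal F)$ reduces to the Korn-type inequality $\|\nabla v\|_{L^2}\leq C\|D(v)\|_{L^2}$, combined with Poincar\'e when $\Omega$ is bounded or with a Hardy inequality when $\Omega=\R^3$, so that $\|\nabla\cdot\|_{L^2}$ is genuinely a norm on $V_0(\mathcal F)$ (both are classical, see~\cite{Galdi}). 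Lax--Milgram then delivers a unique $v$, hence a unique generalised solution $u$.

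For the two variational characterisations, the key identity is that for any $v\in V(\mathcal F)$ and any $\varphi\in V_0(\mathcal F)$ one has $\int_{\mathcal F}\nabla v:\nabla\varphi\,\dd x=2\int_{\mathcal F}D(v):D(\varphi)\,\dd x$: integration by parts gives $\int\partial_j v_i\,\partial_i\varphi_j\,\dd x=-\int v_i\,\partial_i(\div\varphi)\,\dd x=0$, the boundary contributions vanishing because $\gamma\varphi=0$. Hence the Euler--Lagrange equations of the strictly convex functionals $v\mapsto\int|D(v)|^2$ and $v\mapsto\int|\nabla v|^2$ on the affine space $\mathcal A:=\{v\in V(\mathcal F):\gamma v=u_*\mathbf{1}_\Gamma\}$ coincide and both amount to the weak formulation \eqref{eq:var}; each functional therefore admits a unique minimiser on $\mathcal A$, which is nothing but the generalised solution $u$, giving the two desired characterisations.

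The only non-routine point is the lifting: it requires uniform control of the tubular neighbourhoods and of the Bogovskii constant, together with a component-by-component use of \eqref{eq_noflux} on each $\Gamma_i$ separately. Everything else is textbook Lax--Milgram and convex analysis.
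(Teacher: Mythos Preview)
The paper does not actually prove this theorem: it introduces it with ``Standard arguments yield the following result'' and refers globally to \cite{Galdi}, adding only the observation that $\Delta u-\nabla p=\div[\nabla u-p\mathbb I_3]=\div[2D(u)-p\mathbb I_3]$ for divergence-free $u$, which explains why both quadratic functionals share the same Euler--Lagrange equation. Your sketch is a correct and complete implementation of those ``standard arguments'': the Bogovskii-based lifting using \eqref{eq_noflux}, Lax--Milgram on $V_0(\mathcal F)$, and the strict-convexity argument for the two minimisation characterisations are exactly the expected ingredients, and the identity $\int\nabla v:\nabla\varphi=2\int D(v):D(\varphi)$ for $\varphi\in V_0(\mathcal F)$ is precisely the remark the paper records just before the theorem.

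One small simplification: you invoke a Korn-type inequality for coercivity, but you later prove the exact identity $\int_{\mathcal F}|\nabla v|^2=2\int_{\mathcal F}|D(v)|^2$ for $v\in V_0(\mathcal F)$; using this directly makes the separate appeal to Korn unnecessary and shows at once that the two minimisation problems have the same minimiser.
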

Let us mention that the last variational characterization is based on the remark that, the divergence operator for matrices
operating on lines, we have $\Delta u - \nabla p = {\rm div} [ \nabla u - p \mathbb I_3] = {\rm div}[ 2D(u) - p\mathbb I_3]$
when $u$ is divergence free.

\vspace{2mm}

Since we assume that the $(B_i)_{i=1}^N$ have $C^{1,1}$ disjoint boundaries, the fluid boundary $\partial \mathcal F$ is itself at least $C^{1,1}.$ Standard elliptic estimates for the Stokes operator imply then that the previous unique solution satisfies $u\in H^2(\mathcal{F})$ and $p\in H^1(\mathcal{F})$, so that the trace of the stress tensor $\Sigma(u,p)$ on the boundary is well-defined. For arbitrary $i \in \{1,\ldots,N\},$ this allows to define the force and torque (with respect to any center $h_i \in \mathbb R^3$) exerted on the fluid by the $i-th$ solid:  
\begin{align}
\label{def:force}F_i &:= \int_{\partial\Gamma_i} \Sigma(u,p) n\dd \sigma,\\
\label{def:torque}T_i &:= \int_{\partial \Gamma_i} (x-h_i) \times \Sigma(u,p) n\dd \sigma.
\end{align}

\subsection{Remarks on the exterior problem, with different shapes}\label{subsec:ext}
In this second part, we assume $N=1$, that is we have one fixed simply connected shape $\mathcal{B}\subset B(0,R_0)$. We consider two cases $\Omega=\R^3$ and $\Omega=B(0,R)$ (with $R>2R_0$). Our aim is to prove that the Stokes' solutions (with appropriate boundary conditions) on $\mathcal{F}_\infty:=\R^3\setminus \mathcal{B}$ and $\mathcal{F}_R:=B(0,R)\setminus \mathcal{B}$ are somehow close, as $R$ increases, and that this merger does not depend on the shape $\mathcal{B}$ that we have fixed.

\medskip

From the previous standard statements (with $N=1$ and $\Omega=\R^3$ or $\Omega = B(0,R)$), for any fixed values $(\ell,\omega) $ in $\mathbb R^3 \times \mathbb R^3$, we have existence and uniqueness of $u_\infty[\ell,\omega]$ and $u_R[\ell,\omega]$ generalized solutions of the Stokes problem respectively on $\mathcal{F}_\infty$ and $\mathcal{F}_R$ completed with the boundary conditions:
\begin{equation} \label{cab_stokes2}
\left\{
\begin{array}{rcll}
u_\infty(x) &=& \ell + \omega \times x \, &  \text{on $\partial \mathcal B$,} \, \\
 u_R(x) &=& \ell + \omega \times x \, &  \text{on $\partial \mathcal B$,} \, \\
u_R(x) &=& 0 \, & \text{on $\partial B(0,R)$}\,.
\end{array}
\right.
\end{equation}
We denote by $\Sigma_\infty[\ell,\omega]$ and $\Sigma_R[\ell,\omega]$ the corresponding stress tensors and introduce also the force and torque $F_{\infty}[\ell,\omega],T_\infty[\ell,\omega]$ and $F_R[\ell,\omega],T_R[\ell,\omega]$ (the torques are computed with respect to the center $h_1=0$, see \eqref{def:force} -- \eqref{def:torque} for definitions).

\medskip

Finally, we extend any $v\in V(\mathcal{F}_R)$ vanishing on $\partial B(0,R)$,  by $0$ outside $B(0,R)$ and still denote by $E_R[v]$ the corresponding element of $V(\mathcal{F}_{\infty})$ that this extension defines. 
\begin{proposition}\label{prop:demilorentz}
Let $(\ell,\omega)\in\R^3\times\R^3$. For any  $(L,W)\in\R^3\times\R^3$ and any $v\in V(\mathcal{F}_{R})$ such that $v(x)= L + W\times x$ on $\partial \mathcal{B}$ and vanishing on $\partial B(0,R)$, the following identity holds 
\begin{align}
2\int_{\mathcal{F}_R} D(u_R[\ell,\omega]) : D(v) = L \cdot F_R[\ell,\omega] + W \cdot T_R[\ell,\omega].
\end{align}
\end{proposition}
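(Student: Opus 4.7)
The plan is to view the identity as an integration-by-parts computation, starting from the strong formulation of the Stokes equation satisfied by $u_R[\ell,\omega]$. By the standard elliptic regularity recalled just before the statement, both $u_R$ and the associated pressure $p_R$ are smooth enough (e.g.\ $u_R\in H^2$, $p_R\in H^1$ locally near $\partial\mathcal{B}$) for $\Sigma(u_R,p_R)n$ to have a well-defined trace on $\partial\mathcal{B}$. Since $u_R$ is divergence-free, the system $-\Delta u_R+\nabla p_R=0$ can be rewritten as $\operatorname{div}\Sigma(u_R,p_R)=0$ on $\mathcal{F}_R$. Multiplying this identity by the admissible test field $v$ and integrating by parts will produce exactly the two sides of the desired equality.

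More precisely, I would test against $v$ on the bounded domain $\mathcal{F}_R$. Integration by parts gives
\[
0=\int_{\mathcal{F}_R} v\cdot \operatorname{div}\Sigma(u_R,p_R)
=-\int_{\mathcal{F}_R}\Sigma(u_R,p_R):\nabla v+\int_{\partial\mathcal{F}_R}v\cdot\Sigma(u_R,p_R)n\,\dd\sigma,
\]
where $n$ is the outward unit normal to $\mathcal{F}_R$, i.e.\ pointing into $\mathcal{B}$ on the inner boundary and outward on $\partial B(0,R)$. Since $v$ vanishes on $\partial B(0,R)$, only the boundary integral on $\partial\mathcal{B}$ survives, and the orientation matches the one fixed in the definition of $F_R[\ell,\omega]$ and $T_R[\ell,\omega]$.

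The bulk term is then rewritten using $\Sigma(u_R,p_R)=2D(u_R)-p_R\mathbb{I}_3$. The pressure contribution $\int_{\mathcal{F}_R}p_R\,\operatorname{div}v$ vanishes thanks to $v\in V(\mathcal{F}_R)$, and the symmetry of $D(u_R)$ lets us replace $D(u_R):\nabla v$ by $D(u_R):D(v)$. This turns the bulk term into exactly $-2\int_{\mathcal{F}_R}D(u_R):D(v)$.

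For the remaining boundary integral, I use the rigid form of $v$ on $\partial\mathcal{B}$, namely $v(x)=L+W\times x$. Constant vectors factor out and the rotational part is handled by the scalar triple product identity $(W\times x)\cdot(\Sigma n)=W\cdot(x\times\Sigma n)$, giving
\[
\int_{\partial\mathcal{B}}v\cdot\Sigma(u_R,p_R)n\,\dd\sigma
=L\cdot\!\!\int_{\partial\mathcal{B}}\Sigma(u_R,p_R)n\,\dd\sigma
+W\cdot\!\!\int_{\partial\mathcal{B}}x\times\Sigma(u_R,p_R)n\,\dd\sigma
=L\cdot F_R[\ell,\omega]+W\cdot T_R[\ell,\omega],
\]
where the last equality uses that the torque in the proposition is computed with respect to $h_1=0$. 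Combining the two computations yields the claim. There is no serious obstacle here; the only point to handle with care is bookkeeping the orientation of $n$ so that it matches the sign convention for $F_R$ and $T_R$ adopted in the definition of the Stokes resistance matrix.
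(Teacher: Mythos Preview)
Your proposal is correct and follows essentially the same route as the paper: rewrite the Stokes equation as $\operatorname{div}\Sigma(u_R,p_R)=0$, integrate by parts against $v$, use $\operatorname{div} v=0$ to kill the pressure term and the symmetry of $D(u_R)$ to replace $\nabla v$ by $D(v)$, then identify the surviving boundary integral on $\partial\mathcal{B}$ with $L\cdot F_R+W\cdot T_R$. The only cosmetic difference is that the paper invokes a density argument to assume both $u$ and $v$ are smooth, whereas you appeal directly to the elliptic regularity $u_R\in H^2$, $p_R\in H^1$; either justification is fine here.
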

\begin{proof}
A standard density argument allows us to assume that both $u$ and $v$ are smooth. In that case, the identity $-\Delta u + \nabla p = \div\Sigma(u,p) = 0$ is satisfied pointwisely, and we get using successively  the symmetry of $D(u)$, the vanishing divergence of $v$ and an integration by parts 
\begin{align*}
2\int_{\mathcal{F}_R} D(u) : D(v) = \int_{\mathcal{F}_R} 2D(u):\nabla v = \int_{\mathcal{F}_R} \Sigma(u,p) : \nabla v = \int_{\partial \mathcal{B}} \Sigma(u,p) n \cdot v, 
\end{align*}
and the conclusion follows by definition of $F_R[\ell,\omega]$ and $T_R[\ell,\omega]$.
\end{proof}
\begin{corollary}\label{coro:lorentz}
Let $U,V,\ell,\omega\in\R^3$ and assume that $u$ is the solution of the Stokes problem on $\mathcal{F}_R$ with rigid boundary condition on $\partial \mathcal{B}$ (defined by $\ell,\omega$) and constant boundary condition (equal to $U$) on $\partial B(0,R)$. Then, for $v\in V(\mathcal{F})$ such that $v(x)= V$ on $\partial \mathcal{B}$ and vanishing on $\partial\Omega$, the following identity holds
\begin{align}
2\int_{\mathcal{F}_R} D(u) : D(v) = F_R[\ell,\omega]\cdot V -F_R[U,0]\cdot V \quad .
\end{align}
\end{corollary}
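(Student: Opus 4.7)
The plan is to reduce the statement to Proposition~\ref{prop:demilorentz} by subtracting off the constant boundary value on the outer sphere.

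First I would observe that the constant vector field $U$ (viewed as an element of $V(\mathcal{F}_R)$) is a smooth, divergence-free solution of the homogeneous Stokes system (with zero associated pressure), and that its symmetric gradient vanishes identically. Setting $\tilde u := u - U$, one checks immediately that $\tilde u \in V(\mathcal{F}_R)$ solves the Stokes problem on $\mathcal{F}_R$ with boundary data $\tilde u = (\ell - U) + \omega \times x$ on $\partial \mathcal{B}$ and $\tilde u = 0$ on $\partial B(0,R)$. By the uniqueness statement in Theorem~\ref{thm_varcar}, we therefore have $\tilde u = u_R[\ell - U, \omega]$.

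Next I would apply Proposition~\ref{prop:demilorentz} to $\tilde u$ and $v$. Since $D(U) = 0$ we have $D(u) = D(\tilde u)$, so
\begin{align*}
2\int_{\mathcal{F}_R} D(u) : D(v) \;=\; 2\int_{\mathcal{F}_R} D(u_R[\ell - U, \omega]) : D(v).
\end{align*}
Because $v = V + 0 \times x$ on $\partial \mathcal{B}$ and vanishes on $\partial B(0,R)$, Proposition~\ref{prop:demilorentz} applies with $L = V$ and $W = 0$, yielding
\begin{align*}
2 \int_{\mathcal{F}_R} D(u_R[\ell - U, \omega]) : D(v) \;=\; V \cdot F_R[\ell - U, \omega].
\end{align*}

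Finally, I would invoke the linearity of the Stokes problem: by uniqueness (again via Theorem~\ref{thm_varcar}), the map $(\ell, \omega) \mapsto u_R[\ell, \omega]$ is linear from $\R^6$ into $V(\mathcal{F}_R)$, and so is the associated force $(\ell, \omega) \mapsto F_R[\ell, \omega]$. Hence $F_R[\ell - U, \omega] = F_R[\ell, \omega] - F_R[U, 0]$, which, combined with the two displays above, produces the claimed identity.

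There is no real obstacle: the result is essentially a translation-invariance remark for the symmetric gradient, combined with the bilinear identity already established in Proposition~\ref{prop:demilorentz} and the linear dependence of the force on the prescribed rigid motion.
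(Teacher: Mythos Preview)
Your argument is correct and is exactly the approach the paper takes: the paper's one-line proof notes that $u = U + u_R[\ell - U,\omega]$ by linearity and then applies Proposition~\ref{prop:demilorentz}, which is precisely what you have spelled out in detail.
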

\begin{proof}
This follows directly from Proposition \ref{prop:demilorentz} once noticed by linearity that we have $u=U+u_R[\ell-U,\omega]$.
\end{proof}

We have then the following sequence of lemmas:

\begin{lemma}\label{lem:stokesdecay}
There exists a constant $K(R_0),$ depending only on $R_0,$ for which we have, independently of $\mathcal{B}\subset B(0,R_0)$ and $(\ell,\omega)\in\R^3 \times \R^3$: 
\begin{itemize}
\item the estimates:
\begin{equation} \label{est:boundinfini}
\|\nabla u_{\infty}[\ell,\omega]\|_{L^2(\mathcal F_{\infty})} \leq  K(R_0);
\end{equation}
\item  the following decay estimates for $|x|>R_0$ 
\begin{align}
\label{est:decayuinfini}|u_\infty[\ell,\omega](x)|&\leq \frac{K(R_0)|(\ell,\omega)|}{|x|}, \\
\label{est:decaynablauinfini} |\nabla u_\infty[\ell,\omega](x)|&\leq \frac{K(R_0)|(\ell,\omega)|}{|x|^2}.
\end{align}
\end{itemize}
\end{lemma}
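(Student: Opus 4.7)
The plan is to prove the global energy estimate \eqref{est:boundinfini} via the minimization characterization of Theorem \ref{thm_varcar}, and then to deduce the pointwise decay estimates \eqref{est:decayuinfini}--\eqref{est:decaynablauinfini} by combining uniform interior regularity on an annulus well-separated from $\partial\mathcal{B}$ with the classical Green representation formula for the Stokes system. The core idea is that all quantitative information about $\mathcal{B}$ can be replaced, for $|x|$ away from $R_0$, by information on a reference sphere whose geometry is fixed.

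For the energy bound, the strategy is to exhibit a divergence-free extension $\tilde v$ of the rigid boundary datum $\ell + \omega \times x$ whose $H^1$ norm is controlled by $|(\ell,\omega)|$ with a constant depending only on $R_0$. Concretely, I would pick a radial cutoff $\chi \in C^\infty_c(\R^3)$ supported in $B(0,2R_0)$ and equal to $1$ on $B(0,3R_0/2)$, set $w = \chi(\ell + \omega \times x)$, and observe that $\div w = \nabla\chi \cdot (\ell + \omega \times x)$ is supported in the annulus $A = B(0,2R_0)\setminus \overline{B(0,3R_0/2)}$, which sits inside $\mathcal{F}_\infty$ since $\mathcal{B}\subset B(0,R_0)$. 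An application of the Bogovskii operator on $A$ (a geometry independent of $\mathcal{B}$) provides a corrector $g \in H^1_0(A)$ with $\div g = -\div w$ and $\|\nabla g\|_{L^2(A)} \leq C(R_0)|(\ell,\omega)|$; the field $\tilde v = w + g$, extended by zero, then lies in $V(\mathcal{F}_\infty)$, matches the boundary data on $\partial \mathcal{B}$, and satisfies $\|\nabla \tilde v\|_{L^2(\mathcal{F}_\infty)} \leq C(R_0)|(\ell,\omega)|$. The minimization property in Theorem \ref{thm_varcar} immediately gives \eqref{est:boundinfini}.

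For the far-field decay, I would freeze a reference surface $S = \partial B(0, 3R_0/2)$, separated from $\partial\mathcal{B}$ by a distance at least $R_0/2$ independently of $\mathcal{B}$. Interior elliptic regularity for the homogeneous Stokes system on the annulus $B(0, 7R_0/4)\setminus \overline{B(0, 5R_0/4)}\subset \mathcal{F}_\infty$, together with Hardy's inequality to convert the energy bound into an $L^2$ bound there, produces uniform estimates $\|u_\infty\|_{W^{k,\infty}(S)} + \|p_\infty - c\|_{L^\infty(S)} \leq C(R_0) |(\ell,\omega)|$ for a suitable normalizing constant $c$. For $|x| \geq 2R_0$, the Oseen representation formula on the exterior of $B(0, 3R_0/2)$,
\begin{equation*}
u_\infty(x) = \int_S G(x-y)\Sigma(u_\infty,p_\infty - c)(y)n(y)\dd\sigma(y) - \int_S T(x-y):\bigl(u_\infty(y)\otimes n(y)\bigr)\dd\sigma(y),
\end{equation*}
combined with the size of the Stokeslet $|G(x-y)|\lesssim 1/|x|$, the stresslet kernel $|T(x-y)|\lesssim 1/|x|^2$ and their gradients ($1/|x|^2$ and $1/|x|^3$ respectively), yields $|u_\infty(x)|\leq C(R_0)|(\ell,\omega)|/|x|$ and $|\nabla u_\infty(x)|\leq C(R_0)|(\ell,\omega)|/|x|^2$ in this regime. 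In the remaining intermediate regime $R_0 < |x| < 2R_0$, since $|x|$ is bounded below by $R_0$ and above by $2R_0$, the claimed inequalities reduce to uniform $L^\infty$ and $W^{1,\infty}$ estimates of the Stokes solution, which follow from up-to-the-boundary elliptic regularity on the bounded annular piece $B(0, 3R_0/2)\setminus \overline{\mathcal{B}}$ with rigid-motion boundary data.

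The step I expect to be most delicate is obtaining constants in this last up-to-the-boundary regularity that are genuinely uniform across all admissible shapes $\mathcal{B}\subset B(0, R_0)$; the separated-annulus argument handles the far-field cleanly, but the regime $|x|$ close to $R_0$ depends on the geometry of $\partial\mathcal{B}$ and is the one to watch.
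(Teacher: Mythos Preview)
Your energy bound via cutoff plus Bogovskii correction is correct; the paper takes a shortcut by observing that $\ell + \omega\times x = \tfrac12\nabla\times(\ell\times x - \omega|x|^2)$ and using the curl $w=\nabla\times(\chi\Theta)$ as competitor, which is divergence-free by construction and so bypasses the Bogovskii step. Both routes give \eqref{est:boundinfini} with constants depending only on $R_0$.

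For the far-field decay your boundary-integral (Oseen) representation over the fixed sphere $S=\partial B(0,3R_0/2)$ is essentially equivalent to the paper's argument: the paper multiplies $(u_\infty,p_\infty)$ by a radial cutoff $\chi_0$ vanishing in $B(0,R_0)$ and equal to $1$ outside $B(0,3R_0/2)$, obtains a non-homogeneous Stokes system on all of $\R^3$ with compactly supported source in the fixed annulus, and represents the solution by convolution with the Stokeslet. In both versions the decisive input is the same: uniform interior $C^2$ bounds for $(u_\infty,p_\infty)$ on a fixed annulus outside $B(0,R_0)$, controlled by the $L^6$ norm (Sobolev from the energy bound) and hence only by $R_0$.

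The concern you flag at the end is exactly where your outline has a genuine gap. Up-to-the-boundary elliptic regularity on $B(0,3R_0/2)\setminus\overline{\mathcal B}$ depends on the $C^{1,1}$ character of $\partial\mathcal B$, and nothing in the hypothesis $\mathcal B\subset B(0,R_0)$ controls that uniformly, so this step cannot deliver constants independent of the shape. The paper never goes near $\partial\mathcal B$ for the decay estimates: the pointwise bounds in the intermediate zone $R_0<|x|\le 3R_0/2$ come directly from the \emph{interior} regularity on the fixed closed annulus $\overline{A(0,R_0,3R_0/2)}\subset\mathcal F_\infty$---the very same estimate that feeds the convolution/representation formula---and its constants depend only on the annulus geometry. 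The fix for your argument is therefore simply to drop the boundary-regularity step and cover the whole range $R_0<|x|\le 2R_0$ with the interior estimate you already invoked; just slide your reference annulus down so that its inner radius is $R_0$ rather than $5R_0/4$.
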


\begin{lemma} \label{lem_convergenceuR}
There exists a constant $K(R_0),$ depending only on  $R_0,$ for which we have, independently of $\mathcal{B}\subset B(0,R_0)$, $(\ell,\omega) \in\R^3 \times\mathbb R^3$ and $R>2R_0$, the following (uniform) estimates
\begin{align}
\label{est:bound}\|\nabla u_R[\ell,\omega]\|_{L^2(\mathcal{F}_\infty)} &\leq K(R_0)  |(\ell,\omega)|, \\
\label{est:conv}\|\nabla u_\infty[\ell,\omega]- \nabla u_R[\ell,\omega]\|_{L^2(\mathcal{F}_\infty)}&\leq  K(R_0)|(\ell,\omega)|/R^{1/2},\\
\label{est:convforce}|(F_\infty,T_\infty)[\ell,\omega]-(F_R,T_R)[\ell,\omega]| &\leq K(R_0) |(\ell,\omega)|/R^{1/2}.
\end{align}
\end{lemma}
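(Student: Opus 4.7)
The overall strategy is to derive the three estimates as consequences of the Dirichlet-energy variational characterization of Theorem~\ref{thm_varcar}, using a Bogovskii-corrected cut-off of $u_\infty$ supported in an annulus of scale $R$. The key observation is that, by the decay estimates of Lemma~\ref{lem:stokesdecay}, the energy of such a corrector is $O(R^{-1})$. Since the trivial extension of $u_R[\ell,\omega]$ by zero outside $B(0,R)$ remains in $V(\mathcal{F}_\infty)$ (as it vanishes on $\partial B(0,R)$), I can freely compare $u_R[\ell,\omega]$ and $u_\infty[\ell,\omega]$ as elements of $V(\mathcal{F}_\infty)$. I will first establish \eqref{est:conv}; then \eqref{est:bound} will follow by the triangle inequality with \eqref{est:boundinfini}, and \eqref{est:convforce} will be obtained through a bilinear rearrangement.

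For \eqref{est:conv}, the idea is that $w := u_\infty[\ell,\omega] - u_R[\ell,\omega]$ restricted to $\mathcal{F}_R$ solves the Stokes problem with trace $0$ on $\partial\mathcal{B}$ and trace $u_\infty|_{\partial B(0,R)}$ on $\partial B(0,R)$. Theorem~\ref{thm_varcar} then bounds $\|\nabla w\|_{L^2(\mathcal{F}_R)}^2$ by the Dirichlet energy of any divergence-free competitor sharing these traces. I propose the competitor $\eta\, u_\infty + B$, where $\eta$ is a smooth cut-off, identically zero on $B(0,R/2)$, identically one on $\partial B(0,R)$, with $|\nabla\eta|\lesssim 1/R$, and where $B$ is a Bogovskii corrector on the annulus $A(0,R/2,R)$ solving $\mathrm{div}\,B = -\nabla\eta\cdot u_\infty$. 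The compatibility $\int \mathrm{div}(\eta u_\infty) = 0$ holds because $\eta$ vanishes near $\partial\mathcal{B}$ and because rigid velocity fields have zero flux through $\partial\mathcal{B}$. Inserting the pointwise decay \eqref{est:decayuinfini}--\eqref{est:decaynablauinfini}, together with a Bogovskii estimate uniform in $R$ (obtained by rescaling $A(0,R/2,R)$ to a reference annulus), yields $\|\nabla(\eta\, u_\infty + B)\|_{L^2(\mathcal{F}_R)}^2\lesssim R^{-1}|(\ell,\omega)|^2$. Adding the exterior contribution $\|\nabla u_\infty\|_{L^2(\{|x|>R\})}^2\lesssim R^{-1}|(\ell,\omega)|^2$, over the region where $u_R$ vanishes, gives \eqref{est:conv}. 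The bound \eqref{est:bound} is then immediate for $R\geq 2R_0$.

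For \eqref{est:convforce}, I will combine Proposition~\ref{prop:demilorentz} (applied with the test field $v = u_R[K_j]$) with its exterior analog, derived by integrating by parts on large balls and using \eqref{est:decayuinfini}--\eqref{est:decaynablauinfini} to discard the boundary term at infinity, to represent the resistance matrices as Dirichlet pairings:
\begin{equation*}
\mathbb{M}_\bullet(i,j) \;=\; 2\int D(u_\bullet[K_i]):D(u_\bullet[K_j]),\qquad \bullet\in\{R,\infty\}.
\end{equation*}
Setting $v^j := u_R[K_j] - u_\infty[K_j]$ (zero-extended), one has $v^j\in V_0(\mathcal{F}_\infty)$, and the two cross-terms in the corresponding expansion vanish thanks to the weak formulation $\int D(u_\infty[K_i]):D(\varphi)=0$ for $\varphi\in V_0(\mathcal{F}_\infty)$, leaving $\mathbb{M}_R(i,j) - \mathbb{M}_\infty(i,j) = 2\int D(v^i):D(v^j)$. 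Cauchy--Schwarz together with \eqref{est:conv} applied to each $v^i$ will then give $|\mathbb{M}_R - \mathbb{M}_\infty|\lesssim 1/R$, from which \eqref{est:convforce} follows (in fact with the sharper rate $R^{-1}$).

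The main technical obstacle in this plan is keeping the Bogovskii constant on $A(0,R/2,R)$ uniform in $R$; this I propose to resolve by the scaling $x\mapsto x/R$, which reduces the estimate to a fixed reference annulus where the Bogovskii operator is bounded with a constant independent of $R$.
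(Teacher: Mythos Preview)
Your argument is correct, and for \eqref{est:conv} it is essentially the paper's proof: both of you apply the variational characterization to $u_\infty-u_R$ on $\mathcal F_R$ and build a competitor by a Bogovskii-corrected cut-off of $u_\infty$ on the annulus $A(0,R/2,R)$, controlling its energy via the decay \eqref{est:decayuinfini}--\eqref{est:decaynablauinfini}; the paper packages this construction as Corollary~\ref{coro:bogo}, while you spell out the scaling argument for the Bogovskii constant directly. Your derivation of \eqref{est:bound} by the triangle inequality from \eqref{est:boundinfini} and \eqref{est:conv} is a harmless shortcut; the paper instead proves \eqref{est:bound} first, by testing against the explicit curl-of-stream-function lift $w=\nabla\times(\chi\Theta)$ supported near $\mathcal B$.

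Where you genuinely diverge is \eqref{est:convforce}. The paper pairs both $u_R$ and $u_\infty$ against the \emph{same} compactly supported test field $w=\nabla\times(\chi\Theta)$ via Proposition~\ref{prop:demilorentz}, subtracts, and bounds the difference by $\|\nabla(u_R-u_\infty)\|_{L^2}\|\nabla w\|_{L^2}$, which gives the stated rate $R^{-1/2}$. You instead expand $\mathbb M_R(i,j)-\mathbb M_\infty(i,j)$ bilinearly and observe that the cross terms $\int D(u_\infty[K_i]):D(v^j)$ vanish because $v^j=u_R[K_j]-u_\infty[K_j]\in V_0(\mathcal F_\infty)$ is admissible in the exterior weak formulation; this leaves only $2\int D(v^i):D(v^j)$, whence the sharper rate $R^{-1}$. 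Your route exploits the orthogonality built into the variational problem and buys an extra factor of $R^{-1/2}$; the paper's route is slightly more elementary (no need to invoke the exterior weak formulation for the difference) but loses this factor.
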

\begin{proof}
We give a joint proof of both lemmas. In this proof, we denote with the symbol $\lesssim$ an inequality where we possibly dropped
for legibility a multiplicative constant in the right-hand side that may depend on $R_0$ only.

\medskip

\paragraph{\em Proof of \eqref{est:boundinfini} and \eqref{est:bound}.}
Since $\ell,\omega$ are fixed throughout this proof we drop for the sake of clarity the notation $[\ell,\omega]$ and simply write $u_R,u_\infty$ and $(F_R,T_R)$, $(F_\infty,T_\infty)$.  We start by noticing that 
\begin{align*}
\frac{1}{2}\nabla \times(\ell\times x - \omega|x|^2) = \ell + \omega\times x.
\end{align*}
Thus, if $2\Theta(x) := \ell\times x - \omega|x|^2$ and $\chi$ is a cut-off equal to $1$ on $B(0,R_0)$ and vanishing outside $B(0,2R_0)$, we have 
\begin{align}
\label{eq:testsolid} w: = \nabla \times (\chi \Theta)\in V(\mathcal{F}_R) \cap V(\mathcal F_{\infty}),
\end{align}
and $w(x)=\ell+ \omega\times x$ on $B(0,R_0).$ Thus, $w$ matches the boundary  condition on $\partial \mathcal{B}$ and $\partial B(0,R)$ (since $R>2R_0$). Standard arguments relying on the variational characterization of $u_R$ and $u_{\infty},$ as stated in {Theorem \ref{thm_varcar}},
imply directly that
\begin{align*}
\int_{\mathcal F_{\infty}} |\nabla u_{\infty}|^2 \leq \int_{\mathcal{F}_R} |\nabla u_R|^2  \leq \int_{\mathcal{F}_R} |\nabla w|^2,
\end{align*}
from which we infer (by definition of $E_R(u_R)$)
\begin{align*}
\int_{\mathcal F_{\infty}} |\nabla u_{\infty}|^2 \leq  \int_{\mathcal{F}_\infty} |\nabla E_R(u_R)|^2  \leq \int_{\mathcal{F}_R} |\nabla w|^2.
\end{align*}
This proves the first part of \eqref{est:boundinfini} and \eqref{est:bound}, because $w$ has its support included in $B(0,2R_0)$ and satisfies $|\nabla w|\lesssim |\ell|+|\omega|.$

\medskip 

\paragraph{\em Proof of \eqref{est:decayuinfini} and \eqref{est:decaynablauinfini}.}
Estimate \eqref{est:boundinfini} implies {\em via} standard Sobolev embeddings that $E_{\mathbb R^3}[u_{\infty}] \in L^6(\mathbb R^3)$ with 
$$
\|E_{\mathbb R^3}[u_{\infty}]\|_{L^6(\mathbb R^3)} \lesssim |V| + |\omega|
$$ 
Applying then \cite[Theorem IV.4.1]{Galdi} with sufficienly large $m$ and $q=2,$ we obtain that 
\begin{align*}
& u_{\infty} \in C^{2}(\overline{A(0,R_0,3R_0/2)})\,, \\
& p_{\infty} \in C^{1}(\overline{A(0,R_0,3R_0/2)} ), \\
&  \|u_{\infty}\|_{C^{2}(\overline{A(0,R_0,3R_0/2)})} + \|p_{\infty}\|_{C^{1}(\overline{A(0,R_0,3R_0/2)})} \lesssim |V| + |\omega|.
\end{align*}
Consequently, we may introduce a radial truncation function $\chi_{0} \in C^{\infty}(\mathbb R^3)$ satisfying $\chi_0 = 1$ outside $B(0,3/2R_0)$ and
$\chi_0 = 0$ in $B(0,R_0).$ The truncated fields $u^{0}_{\infty} = \chi_0 u_{\infty}$ and $p^0_{\infty} = \chi^0 p_{\infty}$ satisfy a non-homogeneous Stokes equation on $\mathbb R^3$ with regular source terms $(f,g)$ (in the momentum and divergence equations respectively) supported in $A(0,R_0,3R_0/2)$
and such that:
$$
\|f\|_{C(\overline{A(0,R_0,3R_0/2)}\, ; \, \mathbb R^3)} + \|g\|_{C^1(\overline{A(0,R_0,3R_0/2)} \, ; \, \mathbb R)} \lesssim |V| + |\omega|.
$$
By uniqueness (of the generalized solution on $\mathbb R^3$ see \cite[Theorem IV.2.2]{Galdi}), this solution can be computed by convolution with the fundamental solution of the Stokes problem (see \cite[Equation (IV.2.1)]{Galdi}). Reproducing the arguments of \cite[p. 240]{Galdi} we obtain then the decay 
\eqref{est:decayuinfini}-\eqref{est:decaynablauinfini} for $u^0_{\infty}.$ This ends up the proof of  \eqref{est:decayuinfini} and \eqref{est:decaynablauinfini} because $u^0_{\infty} = u_{\infty}$ outside $B(0,2R_0).$

\medskip

\paragraph{\em Proof of \eqref{est:conv} and \eqref{est:convforce}.}
Replacing $(\ell,\omega)$ by arbitrary $(L,W)\in\R^3$ in the definition  \eqref{eq:testsolid} of $w$ and using Proposition \ref{prop:demilorentz} we can write for $R>2 R_0$
\begin{align*}
2\int_{\mathcal{F}_R} D(u_R) : D(w) = L \cdot F_R + W\cdot T_R,
\end{align*}
that we rewrite 
\begin{align*}
2\int_{\mathcal{F}_\infty} D(E(u_R)) : D(w) = L \cdot F_R + W\cdot T_R. 
\end{align*}
The same identity holds replacing $R$ by $\infty$, thus we have 
\begin{align*}
|(F_\infty-F_R,T_\infty-T_R)\cdot(L,W)| \leq 2\|\nabla E_R(u_R)-\nabla u_\infty\|_{L^2(\mathcal{F_\infty})} \|\nabla w\|_{L^2(\mathcal{F_\infty})}.
\end{align*}
As before we have $\|\nabla w\|_2 \leq |L|+|W|$ so that \eqref{est:convforce} is in fact a consequence of the last estimate estimate \eqref{est:conv}, that we yet have to prove.

\medskip

We notice that $u_\infty - u_R$ solves the Stokes system in $\mathcal{F}_R$, with homogeneous boundary conditions on $\partial \mathcal{B}$ and equals $u_\infty$ on $\partial B(0,R)$. We have thus from the variationnal characterization 
\begin{align}
\label{ineq:varcarv}\|\nabla (u_\infty-u_R)\|_{L^2(\mathcal{F}_R)} \leq \|\nabla v\|_{L^2(\mathcal{F}_R)},
\end{align}
for any element $v\in V(\mathcal{F}_R)$ such that $v=u_\infty$ on $\partial B(0,R)$ and $v=0$ on $\partial\mathcal{B}$. Since the extension $E_{\R^3}[u_\infty]$ belongs to $V(\R^3)$, we have in particular that $u_\infty$ has zero flux over $\partial B(0,R/2)$. We thus infer from Corollary \ref{coro:bogo} of the Appendix on the annulus $A_R:=A(0,R/2,R)$ the existence of $\widetilde{u}_\infty\in V(A_R)$ such that $\widetilde{u}_\infty = u_\infty$ on $\partial B(0,R)$ and $\widetilde{u}_\infty = 0$ on $\partial B(0,R/2)$, satisfying furthermore (independently of $R$)
\begin{align*}
\|\nabla \widetilde{u}_\infty\|_{L^2(A_R)} \lesssim \frac{1}{R}\|u_\infty\|_{L^2(A_R)} + \|\nabla u_\infty\|_{L^2(A_R)}.
\end{align*}
If we extend $\widetilde{u}_\infty$ by zero on $B(0,R/2)\setminus \mathcal{B}$, it defines an element of $V(\mathcal{F}_R)$ admissible for the above estimate \eqref{ineq:varcarv} and thus 
\begin{align*}
\|\nabla (u_\infty-u_R)\|_{L^2(\mathcal{F}_R)} \leq  \|\nabla \widetilde{u}_\infty\|_{L^2(\mathcal{F}_R)} = \|\nabla \widetilde{u}_\infty\|_{L^2(A_R)} \lesssim \frac{1}{R}\|u_\infty\|_{L^2(A_R)} + \|\nabla u_\infty\|_{L^2(A_R)}
\end{align*}
Since $A_R\subset\{|x|\geq R/2\}$ and satisfies $|A_R|\lesssim R^3$, estimate \eqref{est:conv} follows using Lemma \ref{lem:stokesdecay}.

\end{proof}

\medskip

Let apply \eqref{eq_defM} to define the Stokes resistance matrix $\mathbb M$ in the case of our obstacle $\mathcal B.$ 
We note that we may apply the previous lemma to $V_i$ and $V_j$ for arbitrary $j \in \{1,\ldots,6\}.$ With a straightforward Cauchy-Schwarz inequality, we obtain then that the coefficients of $\mathbb M$ inherits the bounds on $\nabla u_{\infty}$ computed in \eqref{est:boundinfini}. This yields:

\begin{corollary}  \label{coro:uniformM}
There exists a constant $K(R_0),$ depending only on $R_0,$ for which, independently of $\mathcal B \subset B(0,R_0),$
the Stokes resistance matrix $\mathbb M$  associated with $\mathcal B$ satisfies $|\mathbb M| \leq K(R_0).$
\end{corollary}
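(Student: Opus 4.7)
The plan is essentially the one outlined in the sentence preceding the corollary: combine the variational formula \eqref{eq_defM} for $\mathbb{M}$ with Cauchy--Schwarz, and feed in the energy bound from Lemma \ref{lem:stokesdecay}. No new idea beyond what is already in the section is needed.

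First I would unpack the definition \eqref{t1.6} of the basis rigid fields $K_i$: for $i \in \{1,2,3\}$ the boundary datum $K_i$ corresponds to the pair $(\ell,\omega)=(e_i,0)$, and for $i \in \{4,5,6\}$ it corresponds to $(\ell,\omega)=(0,e_{i-3})$. In particular, each $V_i$ coincides with $u_{\infty}[\ell_i,\omega_i]$ for some pair with $|(\ell_i,\omega_i)|=1$. Since $\mathcal{B}\subset B(0,R_0)$, estimate \eqref{est:boundinfini} of Lemma \ref{lem:stokesdecay} (applied with $|(\ell_i,\omega_i)|=1$) yields
\[
\|\nabla V_i\|_{L^2(\R^3\setminus\mathcal B)} \leq K(R_0), \qquad i=1,\dots,6,
\]
with a constant depending only on $R_0$.

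Next, since $D(V_i)$ is the symmetric part of $\nabla V_i$, the pointwise inequality $|D(V_i)|\leq |\nabla V_i|$ gives $\|D(V_i)\|_{L^2(\R^3\setminus\mathcal B)} \leq K(R_0)$ as well. Using the expression \eqref{eq_defM} for the entries of $\mathbb{M}$ and Cauchy--Schwarz,
\[
|\mathbb{M}_{ij}| = 2\left|\int_{\R^3\setminus\mathcal B} D(V_i):D(V_j)\,\dd x\right| \leq 2\,\|D(V_i)\|_{L^2}\|D(V_j)\|_{L^2}\leq 2\,K(R_0)^2,
\]
for every $1\leq i,j\leq 6$. Since $\mathbb{M}$ is a $6\times 6$ matrix, any matrix norm is controlled by a uniform multiple of its largest entry, so we conclude $|\mathbb{M}|\leq K'(R_0)$ with $K'(R_0)$ depending only on $R_0$, as desired.

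There is really no serious obstacle here: the whole point of having first established Lemma \ref{lem:stokesdecay} \emph{uniformly} in the shape $\mathcal B\subset B(0,R_0)$ was to make exactly this kind of corollary immediate. The only subtlety to flag is that one must use the shape-uniform version of the estimate \eqref{est:boundinfini} (rather than any estimate that depends quantitatively on the regularity of $\partial\mathcal B$), so that the resulting bound on $\mathbb M$ depends only on $R_0$, which is precisely what will be needed to deduce \eqref{eq_ass4bis} from \eqref{eq_ass1} in the sequel.
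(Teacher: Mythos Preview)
Your proof is correct and follows exactly the approach sketched in the paper: apply the uniform energy bound \eqref{est:boundinfini} from Lemma~\ref{lem:stokesdecay} to each $V_i=u_\infty[\ell_i,\omega_i]$ with $|(\ell_i,\omega_i)|=1$, then use Cauchy--Schwarz on the variational formula \eqref{eq_defM} to control each entry of $\mathbb M$. The paper's own argument is stated in the paragraph immediately preceding the corollary and is identical in substance.
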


\medskip
Using Lemma \ref{lem_convergenceuR} and Corollary \ref{coro:lorentz} we get also the following useful corollary
\begin{corollary}\label{coro:lorentzR}
There exists a constant $K(R_0),$ depending only on $R_0,$ such that, independently of $\mathcal B \subset B(0,R_0)$ and under the assumptions of Corollary \ref{coro:lorentz}, the following holds:\begin{align}
\left| 2\int_{\mathcal{F}_R} D(u) : D(v) -   \mathbb{M} \begin{pmatrix} \ell-U\\ \omega\end{pmatrix} \cdot\begin{pmatrix}V\\0\end{pmatrix}  \right| \leq  \frac{K(R_0)}{R^{1/2}}(|\ell|+|\omega|+|U|)|V|,
\end{align}
\end{corollary}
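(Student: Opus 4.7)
The plan is to chain together the three preceding results in this subsection: Corollary \ref{coro:lorentz}, the identification of the Stokes resistance matrix $\mathbb{M}$ with the limit force/torque pair, and the convergence estimate \eqref{est:convforce} of Lemma \ref{lem_convergenceuR}.

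Applying Corollary \ref{coro:lorentz} directly rewrites the integral on the left as $F_R[\ell,\omega]\cdot V - F_R[U,0]\cdot V$. Since $F_R$ depends linearly on its input data (a consequence of the linearity of the Stokes problem and of the definition of the force), this combines into $F_R[\ell-U,\omega]\cdot V$. On the other hand, by construction of $\mathbb{M}$ as the matrix representing the map \eqref{eq:maplin} for the exterior problem on $\R^3\setminus\mathcal{B}$, we have
\[
\mathbb{M}\begin{pmatrix}\ell-U\\\omega\end{pmatrix}\cdot\begin{pmatrix}V\\0\end{pmatrix} = F_\infty[\ell-U,\omega]\cdot V.
\]
Hence the quantity to estimate reduces to $\bigl|(F_R[\ell-U,\omega] - F_\infty[\ell-U,\omega])\cdot V\bigr|$, and a Cauchy--Schwarz inequality bounds it by $|F_R[\ell-U,\omega] - F_\infty[\ell-U,\omega]|\,|V|$.

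To close, estimate \eqref{est:convforce} of Lemma \ref{lem_convergenceuR} applied with data $(\ell-U,\omega)$ controls the first factor by $K(R_0)|(\ell-U,\omega)|/R^{1/2}$, and a triangle inequality gives $|(\ell-U,\omega)|\leq |\ell|+|U|+|\omega|$, yielding the announced bound. There is no real obstacle here: everything is set up for us by the earlier statements, and the only mild care needed is in invoking the linearity of $(\ell,\omega)\mapsto F_R[\ell,\omega]$ to recombine $F_R[\ell,\omega]-F_R[U,0]$ as $F_R[\ell-U,\omega]$, and in noting that only the first block of $\mathbb{M}$ (i.e.\ $\mathbb{M}_I$ and $\mathbb{M}_{II}^\top$) is involved because $v$ matches a purely translational rigid motion on $\partial \mathcal{B}$.
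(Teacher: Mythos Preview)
Your proof is correct and follows precisely the route the paper indicates: combine Corollary~\ref{coro:lorentz} with the convergence estimate~\eqref{est:convforce} of Lemma~\ref{lem_convergenceuR}, using the linearity of $(\ell,\omega)\mapsto F_R[\ell,\omega]$ and the identification of $\mathbb{M}$ with the exterior force/torque map. The paper gives no further detail beyond citing these two ingredients, so there is nothing to compare.
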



\section{Proof of Theorem \ref{thm_main}} \label{sec_proof}

We recall that $h_i^{N}$,  $Q_i^N$, $\ell_i^N$, $\omega_i^N$, $(\mathbb M_i^N)_I $ and $  (\mathbb M_i^N)_{II}$ for $i=1,\dots,N$ satisfy \eqref{eq_ass1} together with Assumptions \ref{ass:1} and \ref{ass:2}. The vector field $u^N$ is then the unique solution of  \eqref{eq_stokesN} -- \eqref{cab_stokesN} in  $\mathcal{F}^N=\Omega\setminus \cup_{i=1}^N B_i^N$, in the following variationnal sense: for arbitrary $\varphi \in V_0(\mathcal F^N),$ there holds:
\begin{equation}
\label{eq:var:N}\int_{\mathcal F^N} D(u^N) : D(\varphi) = 0,
\end{equation}
together with the following boundary conditions, for $i=1,\dots,N$:
\begin{align}
\label{eq:boundaryuN}\left\{
\begin{array}{rcll}
u^N(x) &=& \ell_i^N + \omega_i^N \times (x - h_i^N)   \,, &  \text{on $\partial  B_i^{N}$} \,, \\
u^N(x) &=& 0 \,, & \text{on $\partial \Omega$}\,.
\end{array}
\right.
\end{align}
For the sake of clarity we omit in the proof below the extraction $N_k$ used in the statement of Theorem \ref{thm_main}. 
In the whole proof, we use the notation $\lesssim$ to denote that we have an inequality up to a non-significant multiplicative constant.
Non-significant means that the constant is independent of $N$ but may depend on other geometric parameters (such as $R_0$
as introduced in \eqref{eq_ass1}).

\subsection{Uniform bound for $(E_\Omega(u^{N}))_N$}

First we are going to prove the following result. 
\begin{lemma}\label{L0}
 The sequence $(E_\Omega(u^N))_N$ is  bounded in $V_0(\Omega)$. 
\end{lemma}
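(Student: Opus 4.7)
The plan is to use the variational characterization from Theorem \ref{thm_varcar}: since $u^N$ minimizes the Dirichlet energy in $\mathcal{F}^N$ among divergence-free fields with the prescribed trace, it suffices to exhibit an explicit divergence-free competitor $v^N\in V_0(\Omega)$ matching the rigid boundary data on each $\partial B_i^N$ with $\|\nabla v^N\|_{L^2(\Omega)}$ bounded uniformly in $N$. Then one also controls $\|\nabla E_\Omega(u^N)\|_{L^2(\Omega)}$ by treating separately the extension inside the particles.

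First I would build, for each $i$, a localized corrector at the scale of the particle, following the recipe already used in the proof of Lemma \ref{lem_convergenceuR}. Set $2\Theta_i(x) := \ell_i^N \times (x-h_i^N) - (\omega_i^N/N)\,|N(x-h_i^N)|^2$, pick a smooth cut-off $\chi_i$ equal to $1$ on $B(h_i^N,R_0/N)$ and supported in $B(h_i^N,2R_0/N)$, and define
\begin{equation*}
w_i^N(x) := \nabla \times \bigl(\chi_i(x)\,\Theta_i(N(x-h_i^N))\bigr).
\end{equation*}
By construction $w_i^N$ is divergence-free, vanishes outside $B(h_i^N,2R_0/N)$, and coincides with $\ell_i^N + \omega_i^N\times(x-h_i^N)$ on $B_i^N$. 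A direct computation (equivalently, by rescaling $y = N(x-h_i^N)$ exactly as in \eqref{eq:testsolid}) gives the scaling
\begin{equation*}
\|\nabla w_i^N\|_{L^2(\Omega)}^2 \lesssim \frac{1}{N}\left(|\ell_i^N|^2 + \Bigl|\frac{\omega_i^N}{N}\Bigr|^2\right).
\end{equation*}

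Next I set $v^N := \sum_{i=1}^N w_i^N$. For $N$ large enough, $2R_0/N < \delta^N/2$, so Assumption \ref{ass:1} ensures the balls $B(h_i^N,2R_0/N)$ are pairwise disjoint and lie at distance at least $\delta^N/2$ from $\partial\Omega$. Hence $v^N\in V_0(\Omega)$, matches the rigid boundary condition \eqref{eq:boundaryuN} on every $\partial B_i^N$, and the disjoint-support property together with Assumption \ref{ass:2} yields
\begin{equation*}
\|\nabla v^N\|_{L^2(\Omega)}^2 = \sum_{i=1}^N \|\nabla w_i^N\|_{L^2(\Omega)}^2 \lesssim \frac{1}{N}\sum_{i=1}^N \Bigl(|\ell_i^N|^2+|\omega_i^N/N|^2\Bigr) \lesssim 1.
\end{equation*}
The variational characterization of Theorem \ref{thm_varcar} then gives $\|\nabla u^N\|_{L^2(\mathcal{F}^N)} \leq \|\nabla v^N\|_{L^2(\mathcal{F}^N)} \lesssim 1$.

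Finally, on each $B_i^N$ the extension is the rigid field $\ell_i^N + \omega_i^N\times(x-h_i^N)$, whose gradient has norm $\lesssim |\omega_i^N|$, while $|B_i^N|\lesssim N^{-3}$; summing gives $\sum_i \int_{B_i^N}|\nabla E_\Omega(u^N)|^2 \lesssim \tfrac{1}{N}\sum_i |\omega_i^N/N|^2 \lesssim 1$, again by Assumption \ref{ass:2}. Combining with the bound on $\mathcal{F}^N$ yields the claim.

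The only delicate point is the scaling of the corrector: the natural guess of transitioning across an annulus of the inter-particle scale $\delta^N \sim N^{-1/3}$ is too crude and produces an unbounded sum, so it is crucial to localize the corrector at the particle scale $1/N$ and rely on the separation condition only to ensure that the supports stay disjoint.
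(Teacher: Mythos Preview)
Your proof is correct and follows essentially the same approach as the paper: build a divergence-free competitor as the curl of a vector potential localized at the particle scale $1/N$, use the disjoint-support property and Assumption~\ref{ass:2} to bound the energy, and then invoke the variational characterization. The only cosmetic difference is that the paper works with the symmetric gradient $D$, observes that $\|D(u^N)\|_{L^2(\mathcal F^N)}=\|D(E_\Omega(u^N))\|_{L^2(\Omega)}$ (since rigid motions have zero symmetric gradient), and finishes with Korn's inequality on $V_0(\Omega)$, whereas you bound the full gradient directly and handle the rigid extension inside the particles by hand; also, your definition of $\Theta_i$ is notationally garbled (you mix the translated and rescaled variables), but the intended construction is clear and matches the paper's.
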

\begin{proof}
 We prove this result by applying the variational characterization of the $u^{N}$ given by Theorem \ref{thm_varcar}.  We construct a  sequence $v^{N} \in V(\mathcal F^N)$ that satisfies:
\begin{eqnarray*}
v^N(x) = \ell_i^N + \omega_i^N  \times  (x - h_i^N) && \text{ on $\partial B_{i}^{N}$}  \, \\
v^N(x) = 0 && \text{ on $\partial \Omega\,.$}
\end{eqnarray*}
for which $\|D(v^{N})\|_{L^2(\mathcal F^N)}$ is bounded independently of $N$. Then by {Theorem \ref{thm_varcar}} we will get
$$
\|D(u^{N})\|_{L^2(\mathcal F^N)}  \leq \|D(v^{N})\|_{L^2(\mathcal F^N)}.
$$
Since $$\| D(u^N)\|_{L^2(\mathcal F^N)} = \| D(E_{\Omega}(u^N))\|_{L^2(\Omega)},$$ the Korn inequality in $V_0(\Omega)$ will yield the uniform bound for $E_{\Omega}(u^N)$ in this space.

 \medskip
 
Recall the constant $R_0$ introduced in \eqref{eq_ass1}. Fix a truncation function $\chi \in C^{\infty}(\mathbb R)$ satisfying $\chi(t) = 1$ for $t<3R_0/2$ and $\chi(t) = 0$ for $t>2R_0$ and define for $x$ in $\R^3$, $\chi^{N}(x):= \chi(N|x|)$. Consider then 
\begin{align*}
     v^N_{i} (y) &:= \nabla \times \left( \dfrac{\chi^N (y)}{2}\,  
     \big( \ell_i^N \times y 
     + |y|^2 \omega_i^N 
  \big)    \right),\\
    v^{N}(x) &:= \sum_{i=1}^N  v^N_{i}(x-h^N_i).
\end{align*}
It is important to notice that (for $N$ large enough) the (compact) supports of the $v_i^N(x-h_i^N)$ are disjoint, so that the previous equality allows to recover the expected boundary conditions. Indeed, $v_i^N(x-h_i^N)$ vanishes outside $B(h_i^N,C_N)$ where $C_N =O(1/N)$ is negligible w.r.t. the distance between $h_i^N$ and $h_j^N$ and to the distance of $h_i^N$ to the boundary (thanks to Assumption \ref{ass:1}).  For arbitrary $i \in \{1,\ldots,N\},$ there holds :
\begin{eqnarray*}
|D(v^N_{i})(y)| &\lesssim& \Big( |\nabla \chi^{N}(y)|  + |\nabla^2 \chi^{N}(y)| |y|  \Big) |\ell_i^N| 
\\ \quad &&+ \Big(  |\chi^{N}(y)|  +|\nabla \chi^{N}(y)| |y|  + |\nabla^2 \chi^{N}(y)| |y|^2 \Big) |\omega_i^N|.
\end{eqnarray*}
Using $\nabla \chi^N(z) = N \nabla \chi(Nz)$ and the corresponding change of variable we get 
 $$
 \int_{\mathbb R^3} |D(v^{N}_i)|^2 \lesssim \dfrac{1}{N} 
 \left(  |\ell_i^N|^2 +   | \dfrac{1}{N}  \omega_i^N|^2 \right)	.
 $$
 As noticed before, the $v_i^N$ have disjoint supports so that we infer
\begin{eqnarray*}
 \|D(v^{N})\|_{L^2(\mathcal F^N)}^2 & = &\sum_{i=1}^N  \int_{\mathbb R^3} |D(v^{N}_i)|^2 \\
 							 &\lesssim &  \dfrac{1}{N} \sum_{i=1}^{N}   \left(  |\ell_i^N|^2 +   | \dfrac{1}{N}  \omega_i^N|^2 \right)	.  
 \end{eqnarray*}
Using Assumption \ref{ass:2} we obtain the expected boundedness of $\|D(v^N)\|_{L^2(\mathcal{F}^N)}$, which allows to conclude the proof of Lemma \ref{L0}.
\end{proof}
\bigskip
  As a consequence the sequence $(E_\Omega(u^N))_N$  admits a weak cluster-point in $V_0(\Omega)$, that we denote $\bar{u}$.
Moreover, for  $w \in V_0(\Omega),$ 
up to a subsequence, 
\begin{equation}
\label{gauche}
\int_{\Omega} D( E_{\Omega}(u^{N})) : D (w)  \operatorname*{\longrightarrow}_{N \rightarrow +\infty}  \int_{\Omega} D(\bar{u}) : D(w) \,. 
\end{equation}
We also have by standard compactness argument  that (up to the extraction of a subsequence) $(E_\Omega(u^N))_N$ converges strongly in $L^2(\Omega)$ to $\bar{u}$.

\subsection{Road map of the rest of the proof of Theorem \ref{thm_main}}\label{subsec:roadmap}

Before exploring the rest of the proof in detail let us expose a bit our strategy. We start by naive reduction steps and optimize somehow our approach. Of course, one cannot take $w$ as a test-function in \eqref{eq:var:N} because the restriction of $w$ to $\mathcal{F}^N$ is in general not an admissible test-function because it may take nonzero values on the $\partial B_i^N$. The first reduction is hence to notice that since \eqref{eq:var:N} holds for any $\varphi\in V_0(\mathcal{F}^N)$, we have obviously for any $w\in C^{\infty}_c(\Omega) \cap V_0(\Omega)$
\begin{align*}
  \int_\Omega D(E_\Omega(u^N)) : D(w) = \int_\Omega D(E_\Omega(u^N)) : D(w^N),
\end{align*}  
for any $w^N\in V(\mathcal{F}^N)$ such that $w-w^N\in V_0(\mathcal{F}^N)$, that is for any $V(\mathcal{F}^N)$ lift of the values of $w$ on $\partial \mathcal{F}^N$. There is of course plenty of natural ways to produce such a lift, but our goal is to rely on the Stokes' problem for only one solid with $0$ boundary condition away from it (see Subsection \ref{subsec:ext}). Thanks to Assumption \eqref{ass:1} we know that with respect to their size ($\simeq 1/N$), the solids move away to infinity from one another. This suggests to introduce for each solid $i$ the (disjoints) open set
\begin{align*}
\Omega_i^N = B(h_i^N,\delta^N/2) \setminus B_i^N.
\end{align*}
Thanks to Assumption \ref{ass:1}, for any $N$, the sets $(\Omega_i^N)_{i=1}^N$ are disjoint and included in $\mathcal{F}^N$. Without going deeper in details, we will consider a lift $w^N$ which focuses on these sets, that is: $w^N = w$ on $\partial B_i^N$ for all $i$ and also $w^N = 0$ on $\mathcal{F}\setminus \cup_i \Omega_i^N$. In particular we are now looking at the following quantity 
\begin{align*}
\int_\Omega D(E_\Omega(u^N)) : D(w) = \sum_{i=1}^N \int_{\Omega_i^N} D(u^N): D(w^N).
\end{align*}
After an appropriate change of variable, the integrals that appear in the r.h.s. can be replaced by integrals over a set of the form $B(0,R_N)\setminus \mathcal{B}_i^N$ with $R_N\rightarrow +\infty$ (because $\delta^N\gg 1/N$).  Since $u^N$ solves the Stokes' system (in particular) on $\Omega_i^N$, we thus could use the formula given in Corollary \ref{coro:lorentzR} to compute these integrals \emph{if} $w^N$ is constant on $\partial B_i^N$ and $u^N$ is constant on $\partial B(h_i^N,\delta^N/2)$. This would lead to an explicit formula for the r.h.s. involving the Stokes' resistance matrix.

This suggests to replace both $w^N$ and $u^N$ by appropriate approximations (in a sense that we yet have to define) $\bar{w}^N$ and $\bar{u}^N$ which are respectively constant on $\partial B_i^N$ and $\partial B(h_i^N,\delta^N/2)$ (with $\bar{u}^N$ solving the Stokes system in $\Omega_i^N$). Taking for granted the existence of such approximations, this would led us to 
\begin{multline}
\label{eq:decomp}\int_\Omega D(E_\Omega(u^N)) : D(w) = \sum_{i=1}^N \int_{\Omega_i^N} D(\bar{u}^N): D(\bar{w}^N)\\
 +\sum_{i=1}^N \int_{\Omega_i^N} D(u^N-\bar{u}^N): D(\bar{w}^N)+ \sum_{i=1}^N \int_{\Omega_i^N} D(u^N): D(w^N-\bar{w}^N).
\end{multline}
The first term in the r.h.s. is the leading term of our expansion, it is now tailored to be computed with the help of Corollary \ref{coro:lorentzR} and will call into existence the empirical measure. What we meant above by ``approximation'' is precisely that the sum of the two last terms (denoted by $R^N$)  go to $0$ with $N$. At this stage, if we have fixed the expected constant values on the boundary for $\bar{w}^N$ and $\bar{u}^N$,  we still have a lot of possible choices for $w^N$ and $\bar{w}^N$. Since the sets $\Omega_i^N$ are disjoints, after using the Cauchy-Schwarz inequality for each of the integrals, minimizing $R^N$ amounts to minimize 
\begin{align*}
\|D(\bar{w}^N)\|_{L^2(\mathcal{F})} \text{ and } \|\nabla(w^N-\bar{w}^N)\|_{L^2(\mathcal{F})}, 
\end{align*}
under the corresponding boundary constraints on $\partial \mathcal{F}$. Recalling Theorem \ref{thm_varcar}, this motivates to search $w^N$ and $\bar{w}^N$ as solution of the corresponding Stokes problem with the desired boundary conditions.
 
\medskip

We end this road map of our proof by precising a little bit the choice of constant values that we will fix for the approximations $\bar{w}^N$ and $\bar{u}^N$. For the former, since $w$ is smooth it is reasonnable to expect that its value on $\partial B_i^N$ is almost constant, equal to $w(h_i^N)$ (the size of all the solids is going to $0$) : this is the constant value that we consider for $\bar{w}^N$ on $\partial B_i^N$. As for $u^N$ replacing its values on the boundary $\partial B(h_i,\delta^N/2)$ by a ponctual one seems a bit clumsy since the $H^1(\Omega)$ weak cluster point of $(E_\Omega(u^N))$ is only defined up to a negligible set \emph{a priori}. A more tractable option is to approach $u^N$ on $\partial B(h_i^N,\delta^N/2)$ by its mean value on $\Omega_i^N$, because the latter defines a continuous linear form on $H^1(\mathcal{F}^N)$ (on the contrary to the evaluation at one point). 
\subsection{The (local) Stokes' systems}\label{subsec:localstokes}
Following the previous discussion we first define for each $i\in\{1,\dots,N\}$, $w_i^N$ and $\bar{w}_i^N$ the solutions of the Stokes system in $\Omega_i^N$ with the boundary conditions
\begin{align}
\label{eq:boundw}(w_i^N,\bar{w}_i^N) &= (0,0) \text{ on } \partial B(h_i^N,\delta^N/2),\\
\label{eq:boundbarw}(w_i^N,\bar{w}_i^N)&= (w,w(h_i)) \text{ on } \partial B_i^N,
\end{align} 
and define
\begin{align*}
w^N := \sum_{i=1}^N w_i^N, \qquad \bar{w}^N := \sum_{i=1}^N \bar{w}_i^N.
\end{align*}
In the same way, we choose $\bar{u}^N_i$ to be the solution of the Stokes system in $\Omega_i^N$ with the following boundary conditions  
\begin{align*}
\bar{u}_i^N &= \fint_{\Omega_i^N} u^N\text{ on }\partial B(h_i^N,\delta^N/2),\\
\bar{u}_i^N &= \ell_i^N +\omega_i^N \times(x-h_i^N)\text{ on }\partial B_i^N,
\end{align*}
and define 
\begin{align*}
\bar{u}^N = \sum_{i=1}^N \bar{u}^N_i.
\end{align*}
Let us recall here that the  notation $\fint$ is defined in \eqref{moyenne}.

Since $w^N$ (extended by $0$ on $\mathcal{F}^N\setminus \cup_{i=1}^N\Omega_i^N$) is a lift of the values of $w$, we have the decomposition \eqref{eq:decomp} that we rewrite as 
\begin{align}
\label{eq:decomp:bis} \int_\Omega D(E_\Omega(u^N)) : D(w) = \sum_{i=1}^N \int_{\Omega_i^N} D(\bar{u}^N):D(\bar{w}^N) + R^N_1+R^N_2,
\end{align}
where 
\begin{align}
\label{eq:defR1} R_1^N&:= \sum_{i=1}^N \int_{\Omega_i^N} D(u^N-\bar{u}^N): D(\bar{w}^N),\\
\label{eq:defR2} R_2^N&: = \sum_{i=1}^N \int_{\Omega_i^N} D(u^N): D(w^N-\bar{w}^N)
\end{align}
and the rest of the proof is twofold: identify the limit of the leading term, prove that $R_1^N,R_2^N\rightarrow 0$.  
\subsection{The leading term} 
We have that 
\begin{align}
\label{eq:W}\bar{w}_i^N(x) &= \bar{W}_i^N(N(x-h_i^N)),\\
\bar{u}_i^N(x) &= \bar{U}_i^N(N(x-h_i^N)),
\end{align}
where $\bar{W}_i$ and $\bar{U}_i$ are solutions of the stokes system on $B(0,N \delta^N/2)\setminus \mathcal{B}_i^N$ with the boundary conditions 
\begin{align*}
\bar{W}_i^N &= 0\text{ on }\partial B(0,N\delta^N/2),\\
\bar{W}_i^N &= w(h_i^N) \text{ on }\partial \mathcal{B}_i^N,
\end{align*}
and 
\begin{align*}
\bar{U}_i^N &= m_i^N\text{ on }\partial B(0,N\delta^N/2),\\
\bar{U}_i^N &= \ell_i + \frac{\omega_i^N}{N}\times x \text{ on }\partial \mathcal{B}_i^N,
\end{align*}
where 
\begin{align}
\label{eq:meanin} m_i^N := \fint_{\Omega_i^N} u^N.
\end{align}
By a change of variable, we have that 
\begin{align*}
\int_{\Omega_i^N} D(\bar{u}^N):D(\bar{w}^N) = \frac{1}{N}\int_{B(0,N\delta^N/2)\setminus \mathcal{B}_i^N} D(\bar{U}^N_i):D(\bar{W}^N_i).
\end{align*} 
Since $N\delta^N\rightarrow +\infty$, we directly infer from Corollary \ref{coro:lorentz} 
\begin{align}
\label{eq:leadN}N\int_{\Omega_i^N} D(\bar{u}^N):D(\bar{w}^N) = \mathbb{M}_i^N \displaystyle\begin{pmatrix} \ell_i^N-m_i^N \\\frac{\omega_i^N}{N}\end{pmatrix}\cdot \begin{pmatrix}w(h_i^N) \\ 0\end{pmatrix} + T^N_i,
\end{align}
where (the symbol $\lesssim$ is independent of $N,i$, it depends actually on $R_0$) the remainder $T^N_i$ satisfies 
\begin{align*}
|T^N_i| \lesssim \frac{1}{(N\delta^N)^{1/2}}\left(|\ell_i^N|+\left|\frac{\omega_i^N}{N}\right|+\fint_{\Omega_i^N} |u_i^N|\right)|w(h_i^N)|.
\end{align*}
Recalling the definition of $\delta^N$ in \eqref{eq:delta}, we have $|\Omega_i^N|\gtrsim 1/N$ and 
\begin{align*}
|T^N_i| \lesssim N^{2/3} \|w\|_\infty\left(\frac{1}{N}|\ell_i^N|+\frac{1}{N}\left|\frac{\omega_i^N}{N}\right|+ \int_{\Omega_i^N}|u^N|\right).
\end{align*}
We thus have, using the Cauchy-Schwarz inequality and the fact that the sets $\Omega_i^N$ are disjoints,
\begin{align*}
\sum_{i=1}^N |T_i^N| \lesssim N^{2/3}\|w\|_\infty \left[\left(\frac{1}{N}
\sum_{i=1}^{N} |\ell_i^N |^2\right)^{1/2} + \left( \frac1N \sum_{i=1}^N\left|\frac{\omega_i^N}{N}\right |^2 \right)^{1/2}+\int_\Omega|E_\Omega(u^N)|\right].
\end{align*}
Using Assumption \eqref{eq_ass4} and the boundedness of $(E_\Omega(u^N))_N$ in $V_0(\mathcal{F})\hookrightarrow L^1(\Omega)$, we have 
\begin{align*}
\frac{1}{N}\sum_{i=1}^N |T_i^N| \operatorname*{\longrightarrow}_{N\rightarrow +\infty} 0. 
\end{align*}
Going back to \eqref{eq:leadN} and summing over $i$ we thus have 
\begin{align}
\label{eq:leadNlim}\lim_{N\rightarrow +\infty} \int_{\Omega} D(\bar{u}^N):D(\bar{w}^N) = \lim_{N\rightarrow +\infty} \frac{1}{N} \sum_{i=1}^N\left[\mathbb{M}_i^N \begin{pmatrix} \ell_i^N \\\frac{\omega_i^N}{N}\end{pmatrix}\cdot \begin{pmatrix} w(h_i^N) \\0\end{pmatrix}-\mathbb{M}_i^N \begin{pmatrix}w(h_i^N) \\0 \end{pmatrix} \cdot \begin{pmatrix}m_i^N \\ 0 \end{pmatrix}\right].
\end{align}
Since 
\begin{align*}
\frac{1}{N} \sum_{i=1}^N\left[\mathbb{M}_i^N \begin{pmatrix} \ell_i^N \\\frac{\omega_i^N}{N}\end{pmatrix}\cdot \begin{pmatrix}w(h_i^N)\\ 0 \end{pmatrix}\right] = \left\langle \int_E (\mathbb{M}_I \,  \ell + \frac{1}{N} \mathbb{M}_{II}^T \,  \omega )\dd S^N(\ell,\omega,\mathbb{M}_I,\mathbb{M}_{II}),w \right\rangle_{\mathcal{M}(\Omega),\mathcal{C}^0(\overline{\Omega})},
\end{align*}
using Assumption \eqref{eq_ass5}, this term converges to 
\begin{align*}
\int_\Omega \mathbb{F} \cdot w .
\end{align*}
For the remaining terms of \eqref{eq:leadNlim}, we use the following Lemma.
\begin{lemma}\label{lem:diss}
There holds
\begin{align} 
\frac{1}{N} \sum_{i=1}^N\left[(\mathbb{M}_i^N)_I \, w(h_i^N) \cdot \fint_{\Omega_i^N} u^N\right] \operatorname*{\longrightarrow}_{N \rightarrow +\infty}  \int_{\Omega} w \cdot   \bar{\mathbb{M}}_I \, \bar{u}\,  {\rm d}x.
\end{align}
\end{lemma}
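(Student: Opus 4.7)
My plan is to combine the strong $L^2(\Omega)$ convergence $E_\Omega(u^N) \to \bar u$ (obtained via Rellich from the bound in Lemma \ref{L0}) with the weak measure convergence \eqref{eq_ass6} of $\mu_I^N := \frac{1}{N} \sum_i (\mathbb M_i^N)_I \delta_{h_i^N}$ to $\bar{\mathbb M}_I$. The main obstacle is that $\bar u \in H^1_0(\Omega)$ is only defined almost everywhere, so the pointwise values $\bar u(h_i^N)$ are meaningless and the measure convergence cannot be tested directly against $w \cdot \bar u$. I will circumvent this by regularizing $\bar u$ into smooth functions, controlling the induced errors uniformly in $N$ thanks to the disjointness and the size of the $\Omega_i^N$.

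The first step is to decompose
\begin{align*}
\frac{1}{N}\sum_{i=1}^N (\mathbb M_i^N)_I w(h_i^N) \cdot \fint_{\Omega_i^N} u^N = \frac{1}{N}\sum_{i=1}^N (\mathbb M_i^N)_I w(h_i^N) \cdot \fint_{\Omega_i^N} \bar u + \mathcal E^N,
\end{align*}
and to control the error $\mathcal E^N$ involving $u^N - \bar u$. Using the identity $E_\Omega(u^N) = u^N$ on $\Omega_i^N \subset \mathcal F^N$, the uniform bound \eqref{eq_ass4bis} on $(\mathbb M_i^N)_I$, the smoothness of $w$, Jensen's inequality inside each average, a Cauchy--Schwarz summation in $i$, the disjointness of the $\Omega_i^N$ and the lower bound $|\Omega_i^N| \gtrsim N^{-1}$ (from \eqref{eq_ass1} and Assumption \ref{ass:1}), I would obtain
\begin{align*}
|\mathcal E^N|^2 \lesssim \sum_{i=1}^N \int_{\Omega_i^N} |E_\Omega(u^N) - \bar u|^2 \leq \|E_\Omega(u^N) - \bar u\|_{L^2(\Omega)}^2,
\end{align*}
which vanishes by the Rellich compactness invoked above.

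Next, for $\eps > 0$, pick $\bar u_\eps \in C^\infty_c(\Omega)$ with $\|\bar u - \bar u_\eps\|_{L^2(\Omega)} \leq \eps$. The same Cauchy--Schwarz argument (with $\bar u - \bar u_\eps$ in place of $u^N - \bar u$) bounds the replacement error of $\bar u$ by $\bar u_\eps$ by a constant times $\eps$, uniformly in $N$. For the resulting smooth average, uniform continuity of $\bar u_\eps$ together with $\Omega_i^N \subset B(h_i^N,\delta^N/2)$ and $\delta^N \to 0$ give
\begin{align*}
\sup_i \left| \fint_{\Omega_i^N} \bar u_\eps - \bar u_\eps(h_i^N) \right| \leq \omega_{\bar u_\eps}(\delta^N) \operatorname*{\longrightarrow}_{N \to +\infty} 0,
\end{align*}
where $\omega_{\bar u_\eps}$ denotes the modulus of continuity of $\bar u_\eps$; combined with the uniform bounds on $(\mathbb M_i^N)_I$ and $w$, this allows the replacement of $\fint_{\Omega_i^N}\bar u_\eps$ by $\bar u_\eps(h_i^N)$ at a cost that vanishes as $N \to +\infty$ for each fixed $\eps$.

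Finally, I would pass to the limit in
\begin{align*}
\frac{1}{N}\sum_{i=1}^N (\mathbb M_i^N)_I w(h_i^N) \cdot \bar u_\eps(h_i^N),
\end{align*}
which is a linear combination of the nine scalar quantities $\int_\Omega \psi_{jk}\, d(\mu_I^N)_{jk}$ associated with the continuous test functions $\psi_{jk} := w_j (\bar u_\eps)_k \in C^0_c(\Omega)$. By \eqref{eq_ass6}, this expression converges to $\int_\Omega w \cdot \bar{\mathbb M}_I \bar u_\eps\, \mathrm dx$. A final $\eps \to 0$ passage (justified by a standard diagonal extraction), using $\bar{\mathbb M}_I \in L^\infty(\Omega)$ and $\bar u_\eps \to \bar u$ in $L^2(\Omega)$, produces the announced limit $\int_\Omega w \cdot \bar{\mathbb M}_I \bar u$, which concludes the proof.
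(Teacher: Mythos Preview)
Your proof is correct and rests on exactly the same ingredients as the paper's: the uniform bound \eqref{eq_ass4bis}, the volume estimate $|\Omega_i^N|\gtrsim 1/N$ together with disjointness (giving a uniform $L^2$ control of the averaging operation), the replacement of averages by point values for smooth functions, the measure convergence \eqref{eq_ass6}, and the strong $L^2$ convergence $E_\Omega(u^N)\to\bar u$. The only difference is organizational: the paper packages the argument by introducing the linear forms $T^N\psi=\frac1N\sum_i(\mathbb M_i^N)_I w(h_i^N)\cdot\fint_{\Omega_i^N}\psi$, shows they are uniformly bounded on $L^2(\Omega)$ and converge on the dense set $\mathcal D(\Omega)$ to $T\psi=\int_\Omega\bar{\mathbb M}_I\psi\cdot w$, and then pairs the resulting weak-$*$ convergence $T^N\rightharpoonup T$ in $L^2(\Omega)'$ with the strong convergence $E_\Omega(u^N)\to\bar u$. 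Your explicit $\eps$-regularization is the unfolded version of that same principle; it works, but note that no diagonal extraction is needed in your final step --- a $\limsup_N$ followed by $\eps\to0$ suffices.
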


\begin{proof}
We introduce the following operator
\begin{align*}
T^N : L^2(\Omega)^3 &\longrightarrow \R\\
\psi &\longmapsto   \frac{1}{N} \sum_{i=1}^N\left[(\mathbb{M}_i^N)_I \, w(h_i^N) \cdot \fint_{\Omega_i^N} \psi\right].
\end{align*}
For each $N$ it is a well-defined continuous linear form. As we noticed before we have $|\Omega_i^N|\gtrsim 1/N$ so that, using \eqref{eq_ass4bis} (as a consequence to Corollary \ref{coro:uniformM}):
\begin{align*}
|T^N \psi| \lesssim \sum_{i=1}^N \int_{\Omega_i^N} |\psi| \leq \int_\Omega |\psi|,
\end{align*}
where we used also that the $\Omega_i^N$ are disjoint. In particular, thanks to the Cauchy-Schwarz inequality, the sequence $(T^N)_N$ is bounded. Since the diameter of $\Omega_i^N$ is $\delta^N$, if $\psi\in\mathcal{D}(\Omega)$, one has
\begin{align*}
\Big|\psi(h_i^N)-\fint_{\Omega_i^N} \psi\Big| \leq \delta^N \|\nabla \psi\|_\infty.
\end{align*}
Using estimate \eqref{eq_ass4bis}  we are led to
\begin{align*}
T^N \psi =  \dfrac{1}{N}  \sum_{i=1}^N  \left[(\mathbb{M}_i^N)_I \,   w(h_i^N) \cdot \psi(h_i^N)\right]  +  \textnormal{o}_{N\rightarrow +\infty}(\|w\|_\infty,\|\nabla \psi\|_\infty).
\end{align*}
In particular, applying \eqref{eq_ass6} we get since $w \otimes \psi \in C(\overline{\Omega} ; \mathcal M_{3}(\mathbb R))$:
\begin{eqnarray*}
\lim_{N\to \infty} \frac{1}{N}\sum_{i=1}^{N}  \left[\mathbb (\mathbb{M}_i^N)_I \,  w(h^N_i) \cdot \fint_{\Omega_i^N} \psi \right]
 &=& \lim_{N\to \infty} \left\langle \dfrac{1}{N} \sum_{i=1}^{N} (\mathbb{M}_i^N)_I \,   \delta_{h^N_i} , w \otimes \psi \right\rangle_x  \\
 &=&  \lim_{N\to \infty}\left\langle \int_{E } M_I \,  S_N ( \,{\rm d}\ell \, {\rm d}\omega  {\rm d}M_I \, {\rm d}M_{II}) ,  w \otimes \psi \right\rangle_{x} \\
 &=& \int_{\Omega} \bar{\mathbb{M}}_I  (x) \psi(x) \cdot w(x) {\rm d}x \,.
\end{eqnarray*}
using the symmetry of the matrices $\bar{\mathbb{M}}_I  (x)$. At the end of the day the sequence of linear forms $(T^N)_N$ is bounded in $L^2(\Omega)'$ and converge in $\mathcal{D}'(\Omega)$ to 
\begin{align*}
T \psi := \int_\Omega \bar{\mathbb{M}}_I  (x) \psi(x) \cdot w(x) {\rm d}x \, ,
\end{align*} 
which is another element of $L^2(\Omega)'$. The whole sequence converges thus weakly in $L^2(\Omega)'$ to $T$ and since $(E_\Omega(u^N))_N$ converges strongly to $\bar{u}$ in $L^2(\Omega)$, the conclusion follows. 
\end{proof}
We have thus obtained the following convergence for the leading term 
\begin{align*}
\int_\Omega D(\bar{u}^N): D(\bar{w}^N) \operatorname*{\longrightarrow}_{N\rightarrow +\infty} \int_\Omega [\bar{\mathbb F}-\bar{\mathbb M}_I \,  \bar{u}]\cdot w\qquad .
\end{align*}
\subsection{The first remainder} 
For the sake of clarity, we recall the definition \eqref{eq:defR1} of $R^N_1$: 
\begin{align*}
 R_1^N&:= \sum_{i=1}^N \int_{\Omega_i^N} D(u^N-\bar{u}^N): D(\bar{w}^N).
\end{align*}
For all $i\in\{1,\cdots,N\}$, all three vector fields $u^N, \bar{u}^N$ and $\bar{w}^N$ solve the Stokes system in $\Omega_i^N$. In particular, due to the weak formulation, in the integral over $\Omega_i^N$ above, one can replace $z^N_i:=u^N-\bar{u}^N_i$ by any other element of $V(\Omega_i^N)$ which has the same boundary values on $\partial \Omega_i^N$. The latter are given by $u^N - \fint_{\Omega_i^N} u^N$ on $\partial B(h_i,\delta^N/2)$, and $0$ on $\partial B_i^N$. In particular, $z^N_i$ has zero flux on $\partial B(h_i^N,\delta^N/2)$ and we thus infer from Corollary \ref{coro:bogo} applied on the annulus $A_i^N:=A(h_i,\delta^N/4,\delta^N/2)$ the existence of $\widetilde{z}^N_i\in V(A_i^N)$ such that $\widetilde{z}^N_i = z^N_i$ on $\partial B(h_i^N,\delta^N/2)$ and $\widetilde{z}^N =0$ on $\partial B(h_i^N,\delta^N/4)$, satisfying furthermore 
\begin{align}
\label{ineq:zni}  \|\nabla \widetilde{z}^N_i\|_{L^2(A_i^N)} &\lesssim\frac{1}{\delta^N}\Big\|u^N-\fint_{\Omega_i^N}u^N\Big\|_{L^2(A_i^N)} + \|\nabla u^N\|_{L^2(A_i^N)} \\
&\lesssim\frac{1}{\delta^N}\Big\|u^N-\fint_{\Omega_i^N}u^N\Big\|_{L^2(\Omega_i^N)} + \|\nabla u^N\|_{L^2(\Omega_i^N)}.
\end{align}
 The extension by $0$ of $\widetilde{z}^N$ to $\Omega_i^N \setminus A_i^N$ defines an element of $V(\Omega_i^N)$ which takes the same values as $z^N$ on $\partial \Omega_i^N$, so that owing to our previous remark
\begin{align*}
R_1^N &= \sum_{i=1}^N \int_{\Omega_i^N} D(\widetilde{z}^N_i):D(\bar{w}^N) \\
&=  \sum_{i=1}^N \int_{A_i^N} D(\widetilde{z}^N_i):D(\bar{w}^N),
\end{align*}
because $\widetilde{z}^N_i$ is supported on $A_i^N$. But going back to \eqref{ineq:zni} and using the (rescaled) Poincar\'e-Wirtinger inequality stated in Lemma \ref{lem_PW} we infer 
\begin{align}
\label{ineq:ziN}\|\nabla \widetilde{z}_i^N\|_{L^2(A_i^N)} \lesssim  \|\nabla u^N\|_{L^2(\Omega_i^N)},
\end{align}
so that using the (discrete and continuous) Cauchy-Scharz inequality and the fact that the $\Omega_i^N$ are disjoints we infer 
\begin{align}
\label{ineq:R1} |R_1^N| &\lesssim  \|\nabla u^N\|_{L^2(\mathcal{F}^N)} \left(\sum_{i=1}^N \|\nabla \bar{w}^N\|_{L^2(A_i^N)}^2\right)^{1/2}.
\end{align}
At this point, we recall the scaling $\bar{W}_i^N$  that we used in the previous step, defined in \eqref{eq:W}.  A straightforward change of variables leads to 
\begin{align}
\label{eq:wtoW}\|\nabla \bar{w}^N_{i}\|^2_{L^2(A^N_i )} = \dfrac{1}{N} 
\|\nabla \bar{W}^N_{i}\|^2_{L^2(A^N)},
\end{align} 
where $A^N$ is the annulus $A^N:=A(0,\frac{N\delta^N}{4},\frac{N\delta^N}{2})$.  But, $\bar{W}_i^N$ is the solution of the Stokes system in $B(0,N\delta^N/2)\setminus \mathcal{B}_i^N$, with $w(h_i^N)$ as boundary condition on $\partial\mathcal{B}_i^N$, that is $\bar{W}_i^N = u_{N\delta^N/2}[w(h_i^N),0]$ with the notations of Subsection \ref{subsec:ext}. We thus infer from Lemma \ref{lem_convergenceuR} 
\begin{align*}
\| \nabla \bar{W}^N_i-\nabla \bar{W}_{i,\infty}^N \|_{L^2(A^N)} \lesssim \frac{|w(h_i^N)|}{(N\delta^N)^{1/2}} \leq \frac{\|w\|_\infty}{(N\delta^N)^{1/2}},
\end{align*}
where $\bar{W}_{i,\infty}^N$ is the solution of the Stokes system in the exterior domain $\R^3\setminus \mathcal{B}_i^N$, with $w(h_i^N)$ as Dirichlet boundary condition on $\partial \mathcal{B}_i^N$. In turn, thanks to Lemma \ref{lem:stokesdecay}, we have, since $A^N \subset \{|x|\geq N\delta^N/4\}$
\begin{align*}
\| \nabla W_{i,\infty}^N \|_{L^2(A^N)} \lesssim |A^N|^{1/2} \frac{|w(h_i^N)|}{(N\delta^N)^2} \leq \frac{\|w\|_\infty}{(N\delta^N)^{1/2}},
\end{align*}
so that all in all using the triangular inequality
\begin{align*}
\| \nabla W_{i}^N \|_{L^2(A^N)} \lesssim \frac{\|w\|_\infty}{(N\delta^N)^{1/2}}.
\end{align*}
Using the previous estimate together with \eqref{eq:wtoW} and \eqref{ineq:R1} we are led to 
\begin{align*}
|R_1^N|\lesssim  \|\nabla u^N\|_{L^2(\mathcal{F}^N)}\|w\|_\infty \frac{1}{(N\delta^N)^{1/2}},
\end{align*}
which indeed converges to $0$ because $N\delta^N\rightarrow +\infty$ and we already know that $(E_\Omega(u^N))_N$ is bounded in $H^1(\Omega)$
\subsection{The second remainder}
We recall the definition \eqref{eq:defR2} of $R_2^N$: 
\begin{align*}
R_2^N = \sum_{i=1}^N \int_{\Omega_i^N} D(u^N): D(w^N_i-\bar{w}^N_i).
\end{align*}
As we did for $R_1^N$ we notice that $u^N$ is a solution of the Stokes system in $\Omega_i^N$ and we can thus replace $w^N_i-\bar{w}^N_i$ above by any other element of $V(\Omega_i^N)$ which have the same boundary values on $\partial \Omega_i^N$. Going back to the boundary conditions for $w^N_i$ and $\bar{w}^N_i$, we see that $w^N_i-\bar{w}_i^N$ vanishes on $\partial B(h_i^N,\delta^N/2)$ and equals $w-w(h_i)$ on $\partial B_i^N$. Since $\theta_i^N:= w-w(h_i^N)\in V(\Omega)$, it has zero flux on $\partial B(h_i^N,2R_0/N)$: as before we infer from Corollary \ref{coro:bogo} on the annulus $A_i^N:=A(h_i^N,R_0/N,2R_0/N)$ the existence of $\widetilde{\theta}_i^N\in V(A_i^N)$, equalling $w-w(h_i)$ on $\partial B(h_i^N,2R_0/N)$ and vanishing on $B(h_i^N,R_0/N)$ and satisfying the estimate 
\begin{align}
\label{ineq:theta}  \|\nabla \widetilde{\theta}_i^N\|_{L^2(A_i^N)} \lesssim  N \|w-w(h_i)\|_{L^2(A_i^N)} + \|\nabla w\|_{L^2(A_i^N)}.
\end{align}
Since (after extension by its boundary values) $\theta_i^N-\widetilde{\theta}_i^N \in V(\Omega_i^N)$ and have the same boundary values on $\partial \Omega_i^N$ as $w_i^N-\bar{w}_i^N$, we thus have
\begin{align*}
R_2^N = \sum_{i=1}^N \int_{\Omega_i^N} D(u^N) : D(\theta_i^N - \widetilde{\theta}_i^N).
\end{align*}
But since the diameter of $A_i^N$ is $2R_0/N$ and its measure is bounded by $3R_0^3/N^3$, we infer using \eqref{ineq:theta} the following estimate 
\begin{align*}
 \|\nabla (\theta_i^N-\widetilde{\theta}_i^N)\|_{L^2(A_i^N)} \lesssim  \frac{ \|\nabla w\|_\infty}{N^{3/2}}.
\end{align*}
from which, we eventually get 
\begin{align*}
|R_2^N| \leq \|\nabla u^N\|_{L^2(\mathcal{F}^N)} \frac{\|\nabla w\|_\infty}{\sqrt{N}},
\end{align*}
which indeed goes to $0$ because, again, the sequence $E_{\Omega}(u^N)$ is bounded in $H^1(\Omega)$.
%

\section{Appendix}
 \label{sec:app}

We recall here several standard lemmas.

\medskip

First, we recall that by the Poincar\'e-Wirtinger inequality for arbitrary connected domain $O$, there holds for any test function $u \in H^1(O)$:
$$
\left\| u -  \fint u\right\|_{L^2(O)} \leq C_{O} \| \nabla u\|_{L^2(O)}\,.
$$ 
By a standard homogeneity argument, we have the following behavior of the Poincar\'e-Wirtinger constant under translation/homothety 
\begin{lemma}  \label{lem_PW}
Given $(x_0,\lambda) \in \mathbb R^3 \times (0,\infty)$ there holds for any test function $u$
\begin{align*}
\left\| u -  \fint u\right\|_{L^2(O_{\lambda,x_0})} \leq \lambda C_O \| \nabla u\|_{L^2(O_{\lambda,x_0})},
\end{align*}
where $O_{\lambda,x_0}:=\{\lambda(x-x_0)\,;\,x\in O\}$.
\end{lemma}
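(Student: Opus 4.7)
The plan is to reduce the statement to the standard Poincaré–Wirtinger inequality on $O$ via the affine change of variables $y = \lambda(x-x_0)$, and then keep careful track of the scaling factors that appear when converting $L^2$ norms and gradients between $O$ and $O_{\lambda,x_0}$.

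Concretely, given a test function $u$ on $O_{\lambda,x_0}$, I would introduce $v : O \to \mathbb{R}$ defined by $v(x) := u(\lambda(x-x_0))$, to which the Poincaré–Wirtinger inequality on the fixed reference domain $O$ applies, giving
$$\bigl\| v - \textstyle\fint_O v\bigr\|_{L^2(O)} \leq C_O \|\nabla v\|_{L^2(O)}.$$
A direct computation using the Jacobian $\lambda^3$ yields $\|v\|_{L^2(O)}^2 = \lambda^{-3}\|u\|_{L^2(O_{\lambda,x_0})}^2$, while $\fint_O v = \fint_{O_{\lambda,x_0}} u$ since $|O_{\lambda,x_0}| = \lambda^3 |O|$, and the chain rule combined with the same Jacobian gives $\|\nabla v\|_{L^2(O)}^2 = \lambda^{-1} \|\nabla u\|_{L^2(O_{\lambda,x_0})}^2$.

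Substituting these identities into the inequality for $v$ and multiplying through by $\lambda^{3/2}$ produces exactly the claimed bound with constant $\lambda C_O$. There is no real obstacle here; the only point requiring mild attention is bookkeeping the power of $\lambda$ coming from the gradient (one factor) versus the Jacobian (three factors), which combine to yield the single overall $\lambda$ on the right-hand side.
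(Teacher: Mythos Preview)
Your argument is correct and is precisely the ``standard homogeneity argument'' the paper invokes in lieu of a proof; the scaling factors $\lambda^{-3}$ from the Jacobian and $\lambda^{2}$ from the chain rule combine exactly as you describe to produce the factor $\lambda$ on the right-hand side.
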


Second, we focus on solving the divergence problem:
\begin{equation} \label{eq_divergence}
{\rm div} \, v = f \quad \text{ on $\mathcal F\,.$}
\end{equation}
whose data is $f$ and unknown is $v.$ We recall that, using the Bogovoskii operator (see \cite{Galdi}), we have the following result
\begin{lemma} 
Let $\mathcal F$ be a smooth bounded domain of $\mathbb R^3.$
Given $f \in L^2(\mathcal F)$ such that 
$$
\int_{\mathcal F} f = 0
$$
there exists a solution $v \in H^1_{0}(\mathcal F)$ to \eqref{eq_divergence} 
such that
$$
\|\nabla v\|_{L^2(\mathcal F)} \leq C \|f\|_{L^2(\mathcal F)}
$$
with a constant $C$ depending only on $\mathcal F.$ 
\end{lemma}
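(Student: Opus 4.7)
The plan is to follow the classical Bogovskii construction: first solve the divergence equation on domains that are star-shaped with respect to some ball, then reduce the general smooth bounded case via a partition of unity.

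First, I would treat the case where $\mathcal{F}$ is star-shaped with respect to some open ball $B \Subset \mathcal{F}$. Fix $\omega \in C_c^\infty(B)$ with $\int \omega = 1$ and define the explicit Bogovskii integral operator
$$v(x) := \int_{\mathcal{F}} f(y) \int_1^\infty (x-y)\, \omega\bigl(y + t(x-y)\bigr)\, t^2 \, dt \, dy.$$
A direct computation, relying on $\int \omega = 1$ and the change of variable $s = 1/t$, together with the star-shape property to control the supports, shows that $\operatorname{div} v = f - \omega \int_{\mathcal{F}} f = f$ when $f$ has zero mean, and that $v$ vanishes on $\partial \mathcal{F}$.

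Second, I would prove the estimate $\|\nabla v\|_{L^2(\mathcal{F})} \leq C \|f\|_{L^2(\mathcal{F})}$. Differentiating under the integral sign and performing an integration by parts in the radial parameter $t$ rewrites each component $\partial_i v_j$ as the sum of a Calderón--Zygmund-type singular integral operator acting on $f$ and a bounded remainder. The singular kernel satisfies the standard size condition $|K(x,y)| \lesssim |x-y|^{-3}$ together with the Hörmander cancellation property, so its $L^2$ boundedness follows from classical singular integral theory.

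Third, for a general smooth bounded $\mathcal{F}$, I would cover $\overline{\mathcal{F}}$ by finitely many open sets $U_1, \dots, U_m$ chosen so that each $\mathcal{F} \cap U_j$ is star-shaped with respect to some ball $B_j \Subset \mathcal{F} \cap U_j$, and pick a subordinate partition of unity. Since the zero-mean condition does not pass directly to each $\chi_j f$, one introduces correction terms supported in pairwise overlaps so that each modified piece $f_j$ has zero mean on $\mathcal{F}\cap U_j$ while $\sum_j f_j = f$ globally; the corrections are constructed inductively along a chain of overlapping pieces. Applying the star-shape construction to each $f_j$ and summing yields the desired $v$ with the global estimate. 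The main obstacle is really the $L^2$ bound on star-shaped pieces: purely variational methods are inadequate here, since the divergence is overdetermined, so one must exploit the explicit kernel structure via singular integral theory (equivalently, a duality argument based on the Nečas inequality $\|p\|_{L^2/\mathbb{R}} \lesssim \|\nabla p\|_{H^{-1}}$). By contrast, the partition-of-unity reduction is essentially bookkeeping.
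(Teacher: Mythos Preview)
Your outline is correct and follows the classical Bogovskii construction. Note, however, that the paper does not actually prove this lemma: it is stated as a recalled result with a reference to \cite{Galdi}, so there is no ``paper's own proof'' to compare against. What you have sketched---the explicit integral operator on star-shaped domains, the $L^2$ bound via Calder\'on--Zygmund theory, and the partition-of-unity reduction for general domains---is precisely the argument one finds in that reference.
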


In the case of annuli, the above result yields by a homogeneity argument that
\begin{lemma} \label{lem_div}
Consider an annulus $A:=A(x,r,2r)$ and $f \in L^2(A)$ such that 
\begin{align*}
\int_{A} f = 0.
\end{align*}
There exists $v\in H^1_0(A)$ such that $\div v=f$. Furtermore there exists  a constant $C$ independent of $f$, $x$ ,$r$ such that 
\begin{align*}
\|\nabla v\|_{L^2(A)} \leq C \|f\|_{L^2(A)}.
\end{align*}
\end{lemma}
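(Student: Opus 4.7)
The plan is to deduce Lemma \ref{lem_div} from the preceding Bogovskii-type statement by a standard translation and rescaling argument, using the annulus $A_0 := A(0,1,2)$ as a reference domain. Since $A_0$ is a smooth bounded domain, the previous lemma furnishes a fixed constant $C_0$ such that for every $\tilde{f} \in L^2(A_0)$ with zero mean on $A_0$, there exists $\tilde{v} \in H^1_0(A_0)$ solving $\div \tilde{v} = \tilde{f}$ with $\|\nabla \tilde{v}\|_{L^2(A_0)} \leq C_0 \|\tilde{f}\|_{L^2(A_0)}$. My goal is to transport this result to $A = A(x,r,2r)$ while keeping the constant independent of $x$ and $r$.

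To this end, given $f \in L^2(A)$ with $\int_A f = 0$, I would define the rescaled datum $\tilde{f}(y) := r\, f(x + r y)$ for $y \in A_0$. A change of variables shows $\int_{A_0} \tilde{f} = r \cdot r^{-3} \int_A f = 0$, so $\tilde{f}$ is an admissible datum for the reference lemma. Let $\tilde{v} \in H^1_0(A_0)$ be the corresponding solution, and set $v(z) := \tilde{v}((z-x)/r)$ for $z \in A$. Then $v \in H^1_0(A)$, and
\[
\div_z v(z) = \tfrac{1}{r} (\div_y \tilde{v})\bigl((z-x)/r\bigr) = \tfrac{1}{r} \tilde{f}\bigl((z-x)/r\bigr) = f(z),
\]
so $v$ solves the divergence equation in $A$.

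It remains to compare the norms. A direct change of variables yields $\|\tilde{f}\|_{L^2(A_0)}^2 = r^{-1}\|f\|_{L^2(A)}^2$ (the factor $r$ in the definition of $\tilde{f}$ is chosen precisely so that the prefactor in the final estimate disappears). Likewise, from $\nabla_z v(z) = r^{-1}(\nabla_y \tilde{v})((z-x)/r)$ one obtains $\|\nabla v\|_{L^2(A)}^2 = r \|\nabla \tilde{v}\|_{L^2(A_0)}^2$. Combining these two identities with the reference estimate gives
\[
\|\nabla v\|_{L^2(A)}^2 = r \|\nabla \tilde{v}\|_{L^2(A_0)}^2 \leq r\, C_0^2 \|\tilde{f}\|_{L^2(A_0)}^2 = C_0^2 \|f\|_{L^2(A)}^2,
\]
so the conclusion holds with $C := C_0$, which depends only on the reference annulus $A_0$ and not on $x$ or $r$.

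There is no real obstacle here; the only point requiring care is the bookkeeping of the rescaling factor, which must be chosen so that the scaling of the data compensates the Jacobian of the change of variables and yields a scale-invariant constant. This is precisely the mechanism that lies behind the rescaled Poincaré–Wirtinger Lemma \ref{lem_PW} stated just above in the appendix, and the same reasoning applies verbatim here.
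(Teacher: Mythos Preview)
Your proof is correct and is precisely the homogeneity/rescaling argument the paper invokes (the paper only says ``by a homogeneity argument'' without spelling out the details). The bookkeeping of the powers of $r$ is accurate, and the constant $C = C_0$ is indeed independent of $x$ and $r$.
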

\begin{corollary}\label{coro:bogo}
Consider an annulus $A:=A(x,r,2r)$. For any divergence-free vector field $u\in V(A)$ having zero flux on $\partial B(x,2r)$, there exists $\widetilde{u}\in V(A)$ such that $\widetilde{u}=u$ on $\partial B(x,2r)$ and $\widetilde{u}=0$ on $\partial B(x,r)$. Furthermore we have the following estimate 
\begin{align*}
\|\nabla  \widetilde{u}\|_{L^2(A)} \lesssim  \|\nabla u\|_{L^2(A)} + \frac{1}{r} \|u\|_{L^2(A)},
\end{align*}
where the symbol $\lesssim$ is independent of $u$, $x$, $r$.
\end{corollary}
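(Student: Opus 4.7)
The plan is to build $\widetilde{u}$ by the standard truncation-plus-Bogovski\u{\i} correction trick, using Lemma \ref{lem_div} to fix the divergence.

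First I would fix a smooth radial cutoff $\chi_0\in C^\infty(\R_+)$ with $\chi_0(t)=0$ for $t\leq 5/4$ and $\chi_0(t)=1$ for $t\geq 7/4$, and set $\chi(y):=\chi_0(|y-x|/r)$. Then $\chi$ vanishes on a neighbourhood of $\partial B(x,r)$, equals $1$ on a neighbourhood of $\partial B(x,2r)$, and satisfies $\|\nabla\chi\|_\infty\lesssim 1/r$ with a constant independent of $r$ and $x$. The natural candidate $\chi u$ already has the correct trace on $\partial A$, but in general it is not divergence free: a direct computation using $\div u=0$ gives
\begin{equation*}
\div(\chi u)=\nabla\chi\cdot u.
\end{equation*}

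Next I would verify the compatibility condition allowing the use of the Bogovski\u{\i} operator on $A$. By integration by parts and because $\chi$ vanishes near the inner sphere and equals $1$ on the outer sphere,
\begin{equation*}
\int_A \nabla\chi\cdot u\,\dd y=-\int_A \chi\,\div u\,\dd y+\int_{\partial B(x,2r)} u\cdot n\,\dd\sigma=0,
\end{equation*}
where the first term vanishes because $u\in V(A)$ and the second one by the zero flux assumption on $\partial B(x,2r)$. Lemma \ref{lem_div} then provides $v\in H^1_0(A)$ with $\div v=\nabla\chi\cdot u$ and $\|\nabla v\|_{L^2(A)}\leq C\|\nabla\chi\cdot u\|_{L^2(A)}\lesssim r^{-1}\|u\|_{L^2(A)}$, the constant being $r$-independent by the homogeneity argument recalled just above Lemma \ref{lem_div}.

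Finally I would set $\widetilde{u}:=\chi u-v$. Because $v\in H^1_0(A)$, the traces of $\widetilde{u}$ on $\partial A$ coincide with those of $\chi u$, i.e. $u$ on the outer sphere and $0$ on the inner one, and the identity $\div \widetilde{u}=\chi\,\div u=0$ shows $\widetilde{u}\in V(A)$. For the quantitative estimate I would simply combine the triangle inequality, the Leibniz rule $\nabla(\chi u)=\chi\nabla u+u\otimes\nabla\chi$, and the bound on $\|\nabla v\|_{L^2(A)}$ obtained above to get
\begin{equation*}
\|\nabla\widetilde{u}\|_{L^2(A)}\leq \|\chi\nabla u\|_{L^2(A)}+\|u\otimes\nabla\chi\|_{L^2(A)}+\|\nabla v\|_{L^2(A)}\lesssim \|\nabla u\|_{L^2(A)}+\tfrac{1}{r}\|u\|_{L^2(A)}.
\end{equation*}
There is no real obstacle in this proof; the only point requiring some care is checking that the mean zero condition needed to apply Lemma \ref{lem_div} is precisely guaranteed by the zero-flux hypothesis on the outer sphere (the inner boundary values of $u$ do not even have to be prescribed, since $\chi$ kills them).
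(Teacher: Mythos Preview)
Your proof is correct and follows essentially the same approach as the paper: introduce a radial cutoff $\chi$ that is $1$ near the outer sphere and $0$ near the inner one, check that $\div(\chi u)=\nabla\chi\cdot u$ has zero mean over $A$ thanks to the zero-flux assumption, correct by a Bogovski\u{\i} lift $v\in H^1_0(A)$ from Lemma~\ref{lem_div}, and set $\widetilde{u}=\chi u-v$. The only cosmetic difference is that the paper writes the compatibility condition directly as $\int_A\div(\chi_r u)=\int_{\partial B(x,2r)}u\cdot n$, whereas you pass through the integration-by-parts identity; both arguments are the same.
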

 \begin{proof}
Consider a smooth truncation function $0\leq \chi\leq 1$ vanishing on $B(0,1)$ and equalling $1$ in a neighboorhood of $\partial B(0,2)$ and define $\chi_r(\cdot) := \chi((\cdot-x)/r)$.  The vector field $\chi_r u$ has the expected boundary conditions but is not divergence free. However the integral of $A$ of $ \div (\chi_r u)$ equals zero because, after integration by parts 
\begin{align*}
\int_A \div (\chi_r u) = \int_{\partial B(x,2r)} u\cdot n = 0,
\end{align*}
where we use the zero flux assumption for $u$. We thus infer from Lemma \ref{lem_div} for $f=\div (\chi_r u)$ the existence of $v\in H^1_0(A)$ such that 
\begin{align}
\nonumber \div v &= \div (\chi_r u),\\
\label{ineq:nabv} \|\nabla v\|_{L^2(A)} &\leq C\| \div (\chi_r u) \|_{L^2(A)},
\end{align}
where the constant $C$ is independent of $f$, $x$ and $r$. In particular $\widetilde{u} := \chi_r u - v \in V(A)$, and we still have $\widetilde{u} =u$ on $\partial B(x,2r)$ and $\widetilde{u} =0$ on $\partial B(x,r)$. Lastly 
\begin{align*}
\|\nabla \widetilde{u} \|_{L^2(A)} \leq \|\nabla u\|_{L^2(A)} + \frac{1}{r}\|\nabla \chi\|_\infty \|u\|_{L^2(A)} + \|\nabla v\|_{L^2(A)},
\end{align*}
from which the conclusion follows easily using \eqref{ineq:nabv}.
\end{proof}

\noindent {\bf Acknowledgments} A. Moussa was partially funded by the French ANR-13-BS01-0004 project KIBORD.
M. Hillairet and  F. Sueur were partly supported by the French ANR-13-BS01-0003-01,  Project DYFICOLTI and by the  French  ANR-15-CE40-0010,  Project IFSMACS. 
F. Sueur  was also supported  by the French ANR-16-CE40-0027-01, Project  BORDS. 
\\

\bibliographystyle{abbrv}
\bibliography{MSH}

\end{document}